\newcommand{\calB}{\mathcal{B}}
\newcommand{\calF}{\mathcal{F}}
\newcommand{\calI}{\mathcal{I}}
\newcommand{\calK}{\mathcal{K}}
\newcommand{\calP}{\mathcal{P}}
\newcommand{\calS}{\mathcal{S}}
\newcommand{\ZZ}{\mathbb{Z}}
\newcommand{\RR}{\mathbb{R}}
\newcommand{\CC}{\mathbb{C}}
\newcommand{\eb}{\mathbf{e}}
\newcommand{\vb}{\mathbf{v}}
\newcommand{\xb}{\mathbf{x}}
\newcommand{\yb}{\mathbf{y}}
\def\opn#1#2{\def#1{\operatorname{#2}}} 
\opn\conv{conv} \opn\mut{mut} \opn\GL{GL} \opn\cone{cone} \opn\ini{in} \opn\NF{NF} \opn\deg{deg}
\opn\NF{NF} \opn\sign{sign} \opn\mat{Mat} \opn\rank{rank} \opn\PC{PC}
\newcommand{\FB}{\calF\calB}
\newtheorem{thm}{Theorem}[section]
\newtheorem{cor}[thm]{Corollary}
\newtheorem{lem}[thm]{Lemma}
\newtheorem{prop}[thm]{Proposition}
\newtheorem{problem}[thm]{Problem}
\theoremstyle{definition}
\newtheorem{defi}[thm]{Definition}
\newtheorem{ex}[thm]{Example}
\theoremstyle{remark}
\newtheorem{rem}[thm]{Remark}
\tikzset{
	hollow node/.style={circle,draw,inner sep=2.0}
}                                                                                   
\tikzset{                                                                           
	level 1/.style = {hollow node, level distance=1.2cm, sibling distance=1.6cm}    
}                                                                                   
\tikzset{                                                                           
	level 2/.style = {hollow node, level distance=1.2cm, sibling distance=1.2cm}    
}                                                                                   
\tikzset{                                                                           
	level 3/.style = {hollow node, level distance=1.2cm,sibling distance=0.8cm}     
}                                                                                   
\tikzset{                                                                           
	level 4/.style = {hollow node, level distance=1.2cm,sibling distance=0.4cm}     
}                                                                                   
\tikzset{                                                                           
	end/.style = {hollow node}                                                      
}
\begin{document}

\title{Cohomological rigidity for Fano Bott manifolds}
\author{Akihiro Higashitani}
\author{Kazuki Kurimoto}

\address[A. Higashitani]{Department of Pure and Applied Mathematics, Graduate School of Information Science and Technology, Osaka University, Suita, Osaka 565-0871, Japan}
\email{higashitani@ist.osaka-u.ac.jp}
\address[K. Kurimoto]{Department of Mathematics, Graduate School of Science, Kyoto Sangyo University, Kyoto 603-8555, Japan}
\email{i1885045@cc.kyoto-su.ac.jp}

\subjclass[2010]{
Primary 57R19; 
Secondary 14M25, 
14J45, 
57S15. 
} 
\keywords{Cohomological rigidity, toric Fano manifold, Bott manifold,}

\maketitle

\begin{abstract} 
In the present paper, we characterize Fano Bott manifolds up to diffeomorphism in terms of three operations on matrix. 
More precisely, we prove that given two Fano Bott manifolds $X$ and $X'$, the following conditions are equivalent: 
\begin{itemize}
\item[(1)] the upper triangular matrix associated to $X$ can be transformed into that of $X'$ by those three operations; 
\item[(2)] $X$ and $X'$ are diffeomorphic; 
\item[(3)] the integral cohomology rings of $X$ and $X'$ are isomorphic as graded rings. 
\end{itemize}
As a consequence, we affirmatively answer the cohomological rigidity problem for Fano Bott manifolds. 
\end{abstract}

\section{Introduction}

\subsection{Cohomological rigidity problem}

A toric variety is a normal complex algebraic variety with an algebraic torus of a $\CC^*$-torus which has an open dense orbit. 
In what follows, we call a compact smooth toric variety a toric \textit{manifold}. 

A fundamental fact on toric varieties claims that each toric variety one-to-one corresponds to a combinatorial object, called a \textit{fan}. 
Moreover, the compactness and the smoothness of toric varieties can be interpreted in terms of the associated fans. 
It is well known that the set of toric manifolds up to algebraic varieties one-to-one corresponds to the set of complete nonsingular fans up to unimodular equivalence. 
Moreover, it is also known by Batyrev \cite{Bat99} that toric varieties are isomorphic as algebraic varieties if and only if 
there is a bijection between primitive collections which preserves their associated primitive relations (See Subsection~\ref{sec:prim} for the details of them). 
In particular, the classification of toric manifolds as algebraic varieties is completely done by using combinatorial objects. 

However, the classification of toric manifolds as smooth manifolds is not established yet. 
Inspired by a classification of a certain class of Bott manifolds up to diffeomorphism in \cite{MP08}, the following naive problem was proposed: 
\begin{problem}[{cf. \cite{MS08}, Cohomological rigidity problem for toric manifolds}]\label{toi}
Are two toric manifolds diffeomorphic (or homeomorphic) if their integral cohomology rings are isomorphic as graded rings? 
\end{problem}
We denote the integral cohomology ring $H^*(X; \ZZ)$ of a toric manifold $X$ by $H^*(X)$. In what follows, we omit ``integral'' of integral cohomology. 

We say that a family $\calF$ of toric manifolds is \textit{cohomologically rigid} if any two toric manifolds in $\calF$ 
whose cohomology rings are isomorphic as graded rings are diffeomorphic (or homeomorphic).

No counterexample of the cohomological rigidity problem for toric manifolds is known. 
Towards the solution of Problem~\ref{toi}, many results have been obtained, all of which affirmatively answer the problem. 
Many results are related to Bott manifolds (\cite{Choi15, CLMP, CMM, CMO17, CMS101, CMS102, MP08, MS08}). Those will be explained below. 

\subsection{Bott manifolds and toric Fano manifolds}

The main object of the present paper is Bott manifolds. 
Note that if $B$ is a toric manifold and $E$ is a Whitney sum of complex line bundles over $B$, 
then the projectivization $P(E)$ of $E$ is again a toric manifold. 
Starting with $B$ as a point and repeating this construction, say $d$ times, we obtain a sequence of toric manifolds as follows: 
\begin{align*}
B_d \xrightarrow{\pi_d} B_{d-1} \xrightarrow{\pi_{d-1}} \cdots \xrightarrow{\pi_2} B_1 \xrightarrow{\pi_1} B_0 = \{\text{a point}\}, 
\end{align*}
where the fiber of $\pi_i : B_i \rightarrow B_{i-1}$ for $i = 1, \ldots, d$ is a complex projective space ${\CC}P^{n_i}$. 
This sequence is called a \textit{generalized Bott tower} of height $d$, and 
we call $B_d$ a \textit{$d$-stage generalized Bott manifold}. 
We say that $B_d$ is \textit{$d$-stage Bott manifold} (omitted ``generalized'') when $n_i=1$ for every $i$.

Bott manifolds are very well-studied objects in the area of toric topology. 
In fact, there are many results on cohomological rigidity problem for (generalized) Bott manifolds. 
The following theorems are a part of the known results on cohomological rigidity problem for toric manifolds concerning with (generalized) Bott manifolds. 
\begin{thm}[{\cite[Theorem 1.1]{CMS102}}]
Let $X$ be a $d$-stage generalized Bott manifold. If $H^*(X)$ is isomorphic to $H^*(\prod_{i=1}^d{\CC}P^{n_i})$ as graded rings, 
then every fibration is topologically trivial; in particular, $X$ is diffeomorphic to $\prod_{i=1}^d{\CC}P^{n_i}$. 
\end{thm}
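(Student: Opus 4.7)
My plan is to argue by induction on the tower height $d$, the case $d=0$ being trivial. For the inductive step, write $B_d = P(E_d)$ with $E_d = \bigoplus_{j=0}^{n_d} L_{d,j}$ a Whitney sum of line bundles on $B_{d-1}$, and set $c_j = c_1(L_{d,j}) \in H^2(B_{d-1})$. The Borel--Hirzebruch formula gives
\[
H^*(B_d) \;\cong\; H^*(B_{d-1})[x]\Big/\Big(\textstyle\prod_{j=0}^{n_d}(x+c_j)\Big),
\]
and the hypothesis supplies a graded ring isomorphism $\phi\colon H^*(\prod_{i=1}^{d}\CC P^{n_i}) \xrightarrow{\sim} H^*(B_d)$. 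Since each $\pi_i$ is topologically trivial exactly when $E_i \cong L_i \otimes \underline{\CC}^{n_i+1}$ for some line bundle $L_i$, the problem reduces to a rigidity statement for the Chern classes of the line-bundle summands.

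The first step is to descend the cohomological hypothesis to the base, i.e.\ to produce from $\phi$ a graded ring isomorphism $H^*(B_{d-1}) \cong H^*(\prod_{i=1}^{d-1}\CC P^{n_i})$ so that the inductive hypothesis applies to $B_{d-1}$. This should follow from the Leray--Hirsch description of $H^*(B_d)$ as a free rank-$(n_d{+}1)$ module over $H^*(B_{d-1})$, together with a Poincar\'e-polynomial comparison and an intrinsic identification of $H^*(B_{d-1}) \subset H^*(B_d)$ via the degree-two structure.

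The central step is to deduce topological triviality of $E_d$ over $B_{d-1} \cong \prod_{i=1}^{d-1}\CC P^{n_i}$. Since topological complex vector bundles over a product of projective spaces are classified by their Chern classes, it suffices to exhibit $\beta \in H^2(B_{d-1})$ with $\prod_{j=0}^{n_d}(1+c_j) = (1+\beta)^{n_d+1}$. Using $\phi$, the relation $\prod_j(x+c_j) = 0$ must match, after a change of generators, the pure-power relation $y_d^{n_d+1} = 0$ of the model ring; comparing elementary symmetric functions of the $c_j$ with those of the constant tuple $(\beta,\ldots,\beta)$ then gives the desired Chern class identity. Once $E_d$ is topologically a twisted trivial bundle, $\pi_d$ is trivial, and induction on $B_{d-1}$ finishes the proof.

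The main obstacle is the algebraic bookkeeping in the central step. Because $H^*(B_{d-1})$ contains nilpotent elements, the cohomology relation $\prod_j(x+c_j) = 0$ does \emph{not} force the $c_j$ to be literally equal --- for instance $(x-t)(x+t) = x^2$ when $t^2=0$ --- only that the defining polynomial differs from a pure power by something in the lower-stage ideal. Translating this into the identity of total Chern classes above, and in particular pinning down the right $\beta$, is the technical heart of the argument.
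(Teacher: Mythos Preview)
The paper does not prove this theorem; it is quoted in the introduction as \cite[Theorem 1.1]{CMS102} (Choi--Masuda--Suh) purely as background motivating the cohomological rigidity problem, with no proof or sketch supplied. There is therefore no ``paper's own proof'' to compare your proposal against.

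That said, a remark on your outline. Your inductive scheme and use of the Borel--Hirzebruch presentation are the natural framework, and you correctly isolate the real difficulty at the end: the relation $\prod_j (x+c_j)=0$ in a ring with nilpotents does not force the $c_j$ to coincide, so matching total Chern classes is not by itself enough. One statement in the middle is, however, wrong as written: topological complex vector bundles over a product of projective spaces are \emph{not} in general classified by their Chern classes, so you cannot conclude ``$E_d$ is a twisted trivial bundle'' from $\prod_j(1+c_j)=(1+\beta)^{n_d+1}$ alone. What actually makes the argument work is that $E_d$ is a \emph{sum of line bundles}, and line bundles \emph{are} determined by $c_1$; hence the goal should be to show that all $c_j$ are equal (after the usual normalization $c_0=0$, that all $c_j=0$), not merely that the elementary symmetric functions agree with those of a constant tuple. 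The passage from the symmetric-function identity to equality of the $c_j$ themselves is exactly the ``algebraic bookkeeping'' you flag, and it is the substantive content of the original proof in \cite{CMS102}; your proposal identifies it but does not yet supply it.
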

\begin{thm}[{\cite{Choi15, CMS102}}]\label{thm:Bott}
The following results are known: 
\begin{itemize}
\item $2$-stage generalized Bott manifolds are cohomologically rigid (\cite[Theorem 1.3]{CMS102}); 
\item $3$-stage Bott manifolds are cohomologically rigid (\cite[Theorem 1.3]{CMS102}); 
\item $4$-stage Bott manifolds are cohomologically rigid (\cite[Theorem 3.3]{Choi15}). 
\end{itemize}
\end{thm}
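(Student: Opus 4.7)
The plan is to handle each of the three statements by explicitly computing the cohomology ring of a (generalized) Bott manifold in terms of its defining data and then extracting from a ring isomorphism enough rigidity to produce a diffeomorphism. Recall that a $d$-stage (generalized) Bott manifold $B_d$ is determined by an upper-triangular integer matrix $A=(a_{ij})$ encoding the line bundles whose projectivizations give the tower, and that $H^*(B_d)$ is a quotient of $\ZZ[x_1,\dots,x_d]$ by explicit quadratic (or $(n_i{+}1)$-th power) relations built from the columns of $A$. So the task for each fixed $d$ is: given a graded ring isomorphism $\varphi\colon H^*(B_d)\to H^*(B_d')$, recover matrices $A,A'$ that define diffeomorphic towers.

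My first step for the $2$-stage generalized case would be to observe that $H^2$ is a free abelian group of rank $2$, and the degree-$2$ elements $u$ satisfying $u^{n_i+1}=0$ (or more precisely the minimal nontrivial monomial relations) form a distinguished set that pins down the fibration direction. Once one identifies the ``fiber class'' and the ``base class'' up to sign and $\GL$-transformations preserving the integrality of the relations, the defining matrix $A$ is recovered up to the moves that are already known to correspond to bundle isomorphisms (sign change of a line bundle, dualization), and these moves preserve diffeomorphism type. This gives the first bullet.

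For the $3$-stage and $4$-stage Bott case, I would follow the same template but now work with generators $x_1,\dots,x_d$ each satisfying $x_i^2=\alpha_i x_i$ with $\alpha_i\in\bigoplus_{j<i}\ZZ x_j$. I would analyze which elements of $H^2$ have the property that their square lies in their own ideal (i.e.\ admit a linear factor), since these are the candidates for pullbacks from lower stages of the tower. Stratifying $H^2$ by this algebraic data should let one peel off the top stage first, then apply induction on $d$, with the caveat that the peeling is not unique: ambiguities correspond exactly to the bundle-level operations (flipping signs of columns, reordering ``parallel'' stages, swapping trivial factors). One then checks that each such ambiguity is realized by a diffeomorphism, so the induced tower is determined up to diffeomorphism.

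The main obstacle, and the reason the statements are proved separately and with increasing difficulty, is the peeling step: the set of degree-$2$ square-zero (or ``decomposable-square'') classes becomes combinatorially rich in higher stage, and some potential ring isomorphisms do not come from any obvious geometric operation on the tower. The hard part is therefore to show that every algebraic isomorphism of the cohomology rings can be realized by a composition of the elementary matrix operations corresponding to diffeomorphisms; this is what forces the $4$-stage case in \cite{Choi15} to require substantial case analysis and is presumably why the fully general case remains open. In our Fano setting, the Fano condition will drastically restrict the entries of $A$, and exploiting this restriction to bypass the combinatorial explosion is the strategy one should expect in the sequel.
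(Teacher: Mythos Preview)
The paper does not prove this theorem at all: it is stated in the introduction as a known result, with citations to \cite{CMS102} and \cite{Choi15}, and no argument is given. So there is no ``paper's own proof'' to compare against; the theorem functions as background motivation for the main result on Fano Bott manifolds.

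As for your proposal itself, it is a reasonable high-level outline of the kind of argument that appears in those references, but it is not a proof. You correctly identify the presentation of $H^*(B_d)$ and the idea of peeling off stages by locating distinguished degree-$2$ classes, and you correctly flag that the ambiguity in this peeling is the crux. However, the actual content of \cite{CMS102} and \cite{Choi15} lies precisely in the ``substantial case analysis'' you defer: one must show that every graded ring automorphism of the cohomology decomposes into the elementary moves corresponding to known diffeomorphisms, and this requires careful bookkeeping of how an arbitrary isomorphism can permute or mix the generators. Your sketch names the obstacle but does not overcome it, so it would not stand as an independent proof. Since the paper only cites the result, this is fine for the paper's purposes, but you should not present your outline as a proof of the theorem.
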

Note that Theorem~\ref{thm:Bott} is still open for $d$-stage Bott manifolds with $d \geq 5$. 

The following is one of the most important contributions to cohomological rigidity problem for Bott manifolds: 
\begin{thm}[{\cite[Theorem 1.1]{CMM}}]
Any graded ring isomorphism between the cohomology rings of two Bott manifolds preserves their Pontrjagin classes. 
\end{thm}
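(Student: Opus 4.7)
The plan is to reduce the problem to a ring-theoretic statement about ``Bott bases.'' As a first step, I would compute the total Pontrjagin class of a $d$-stage Bott manifold $B_d$ explicitly. Writing $B_d$ as an iterated $P(\calO \oplus L_i)$-bundle and applying the relative Euler sequence at each stage, the stable tangent bundle decomposes as a sum of complex line bundles. Letting $x_i \in H^2(B_d)$ denote the first Chern class of the tautological quotient at stage $i$ and $a_i \in H^2(B_{i-1})$ the first Chern class of $L_i$---so that $H^*(B_d) \cong \ZZ[x_1,\ldots,x_d]/(x_i^2 - a_i x_i : 1 \le i \le d)$---a standard computation yields
\[
p(B_d) \;=\; \prod_{i=1}^d (1 + x_i^2)(1 + (x_i - a_i)^2) \;=\; \prod_{i=1}^d (1 + a_i^2),
\]
where the simplification uses $x_i^2 = a_i x_i$ throughout.

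Next, I would introduce a ring-theoretic notion of \emph{Bott basis}: an ordered $\ZZ$-basis $(y_1,\ldots,y_d)$ of $H^2(B_d)$ such that $y_i^2 = b_i y_i$ with $b_i$ lying in the subring generated by $y_1,\ldots,y_{i-1}$. A graded ring isomorphism $\phi : H^*(B_d) \to H^*(B'_d)$ tautologically carries any Bott basis of the source to a Bott basis of the target, with the structure classes transported accordingly. Hence, if one can show that the multiset $\{b_1^2,\ldots,b_d^2\}$ (and therefore the product $\prod_i (1 + b_i^2)$) is intrinsic to the graded ring---independent of the Bott basis chosen---then $\phi(p(B_d))$ and $p(B'_d)$ will both equal this common expression, and the theorem follows.

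The main obstacle lies in establishing this invariance across different Bott bases of the same cohomology ring. The obvious symmetries are (i) the sign-involution $y_i \mapsto b_i - y_i$, under which the structure class $b_i$ (and hence $b_i^2$) is preserved because $(b_i - y_i)^2 = b_i(b_i - y_i)$, and (ii) signed permutations of the $y_i$'s respecting the filtration by subrings. I would prove by induction on $d$ that any two Bott bases of $H^*(B_d)$ are related by a finite composition of such moves. The inductive step demands an intrinsic characterization of the ``top'' generator $y_d$---for instance, via a canonical filtration of $H^*(B_d)$ by ideals or by subrings generated by degree-$2$ elements with compatible quadratic relations---after which one quotients out and invokes the case $d-1$. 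Ruling out exotic changes of Bott basis that could alter the multiset $\{b_i^2\}$ constitutes the technically deepest part of the argument, and amounts to a careful analysis of how integrality of the quadratic relations constrains $\ZZ$-linear changes of basis on $H^2$.
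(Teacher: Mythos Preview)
The paper does not prove this theorem; it is quoted from \cite{CMM} as background, so there is no argument here to compare your proposal against. Your reduction is sound---the formula $p(B_d)=\prod_i(1+a_i^2)$ is correct, and invariance of this product under change of Bott basis is exactly what must be shown---but the strategy you outline for that invariance does not work.

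You propose to prove that any two Bott bases are related by compositions of (i) $y_i\mapsto b_i-y_i$ and (ii) signed filtration-compatible permutations. This is false already for $d=2$. In $H^*(\CC P^1\times\CC P^1)=\ZZ[x_1,x_2]/(x_1^2,x_2^2)$ the pair $(x_1,\,x_1+x_2)$ is a Bott basis, since $(x_1+x_2)^2=2x_1x_2=(2x_1)(x_1+x_2)$ gives $b_2=2x_1$; but starting from $(x_1,x_2)$, where $b_1=b_2=0$, moves (i) and (ii) only produce $(\pm x_{\sigma(1)},\pm x_{\sigma(2)})$. So ``exotic'' Bott-basis changes genuinely exist, and the inductive step you sketch---singling out a canonical top generator and peeling it off---cannot succeed, because there is no intrinsic way to distinguish $x_2$ from $x_1+x_2$ as the top element. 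What rescues the conclusion in this example is that $b_2^2=4x_1^2=0$, so $\prod(1+b_i^2)=1$ either way; but that is forced by the relation $x_1^2=0$, not by your moves. A correct proof must confront such non-obvious basis changes directly rather than exclude them: the invariance of $\prod(1+b_i^2)$ holds not because exotic changes are absent, but because the \emph{squares} $b_i^2$---unlike the $b_i$ themselves---are collectively constrained by the integral quadratic relations, and it is this constraint that has to be established.
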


\smallskip

Many researchers also investigate toric \textit{Fano} manifolds. 
Here, we say that a nonsingular projective variety is \textit{Fano} if its anticanonical divisor is ample. 
Note that there are only finitely many toric Fano $d$-folds for each fixed $d$. 
The classification problem of toric Fano $d$-folds is of particular interest in the study of toric Fano manifolds. 
The classification of toric Fano $4$-folds was accomplished by Batyrev \cite{Bat99} and Sato \cite{Sato}. 
The key tool for this classification is \textit{primitive collections} and \textit{primitive relations} (See Subsection~\ref{sec:prim}). 
After their classification, {\O}bro \cite{Oeb} succeeded in constructing an algorithm, called \textit{SFP algorithm}, 
which produces the complete list of smooth Fano $d$-polytopes for a given positive integer $d$. 
For small $d$'s, the database of all smooth Fano $d$-polytopes is available in the following web page: 
\begin{center}{\tt http://www.grdb.co.uk/forms/toricsmooth} \end{center}
The classification problem of toric Fano manifolds was solved in some sense. 

By using those classifications, the authors and Masuda proved the following: 
\begin{thm}[{\cite[Theorem 1.1]{HKM}}]
Toric Fano $d$-folds with $d \leq 4$ are cohomologically rigid except for two toric Fano $4$-folds. 
\end{thm}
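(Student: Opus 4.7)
The plan is to reduce the theorem to a finite, case-by-case verification by invoking the complete classification of toric Fano $d$-folds for $d\le 4$ due to Batyrev and Sato, which is readily accessible through {\O}bro's database. Since there are only finitely many such manifolds ($5$ for $d=2$, $18$ for $d=3$, and $124$ for $d=4$), every step below is in principle mechanical; the only genuine subtlety is producing diffeomorphisms in those cases where cohomology rings coincide.

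First I would, for each toric Fano $d$-fold $X$ on the list, write down the standard Stanley--Reisner presentation
\[
H^*(X)\;\cong\;\ZZ[x_1,\dots,x_n]\big/\bigl(I_{\mathrm{lin}}+I_{\mathrm{SR}}\bigr),
\]
where $I_{\mathrm{lin}}$ is generated by the linear forms coming from the ray generators of the fan and $I_{\mathrm{SR}}$ by the primitive collections. Next I would partition the list into equivalence classes under graded-ring isomorphism of $H^*(X)$; this is done by first matching coarse invariants (Betti numbers, the cubic intersection form on $H^2$, total Chern class read inside the cohomology, Pontrjagin classes) and, when these agree, by searching directly for a $\ZZ$-linear change of variables realizing an isomorphism. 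For each resulting nontrivial coincidence class I would then construct an explicit diffeomorphism between its members, typically by exhibiting both manifolds as iterated $\CC P^{n_i}$-bundles over a common base and checking that a cohomology-ring isomorphism forces the defining line bundles to agree up to sign, so that a bundle isomorphism can be built in the spirit of the Bott-tower arguments of \cite{CMS102, Choi15, CMM}.

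The hard part, and the source of the exceptional clause in the statement, is precisely this last step. For $d\le 3$ and for all but two coincidence patterns among the $124$ toric Fano $4$-folds, a suitable common projective-bundle structure is visible from the fan data and the diffeomorphism drops out essentially for free. For the two exceptional toric Fano $4$-folds, however, the cohomology ring matches that of another member of the list while the fan fails to present them as iterated projective bundles in a compatible way, so none of the available constructions produces a diffeomorphism and the question genuinely remains open; the theorem is therefore asserted modulo those two cases. The main obstacle of the proof is thus concentrated on these two fans, while everything else amounts to a bookkeeping-heavy but routine finite verification.
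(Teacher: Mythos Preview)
This theorem is not proved in the paper you are working from. It appears in the Introduction as a citation of \cite[Theorem~1.1]{HKM}, a separate preprint by Higashitani, Kurimoto and Masuda; the present paper merely records it as a known result and moves on to its own main theorem about Fano \emph{Bott} manifolds. Consequently there is no proof here to compare your proposal against.

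That said, your outline is broadly what one would expect from the cited work: the classification of toric Fano $d$-folds for $d\le 4$ is finite and explicit, so the argument must ultimately be a case-by-case verification of cohomology rings together with diffeomorphism constructions for the coincident pairs. Your description of the two exceptional $4$-folds (ID~50 and ID~57 in the database, as noted after the theorem) is also consistent with the paper's remark: these form a pair with isomorphic cohomology rings for which a diffeomorphism has not been produced, so rigidity remains open for them rather than failing. One point to tighten: the paper's actual diffeomorphism lemma (Lemma~\ref{key} here, which is \cite[Lemma~2.3]{HKM}) is phrased at the level of primitive ray vectors agreeing up to sign, not in the bundle-theoretic language you sketch; if you intend to reproduce the HKM argument you should use that criterion rather than appealing loosely to the Bott-tower results of \cite{CMS102, Choi15}, since many of the toric Fano $4$-folds on the list are not Bott manifolds at all.
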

Note that the exceptional toric Fano $4$-folds have ID 50 and ID 57 in the above web page.

Furthermore, the following has been also proved in \cite{CLMP} by Cho, Lee, Masuda and Park: 
\begin{thm}[{\cite[Thereom 1.2]{CLMP}}]
For two Fano Bott manifolds $X$ and $X'$, if there is an isomorphism between their cohomology rings 
which preserves the first Chern class, then $X$ and $X'$ are isomorphic as algebraic varieties. 
\end{thm}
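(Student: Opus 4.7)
The plan is to encode each Fano Bott manifold by its standard upper triangular matrix, translate the hypotheses into combinatorial data on these matrices, and then invoke Batyrev's theorem characterizing algebraic isomorphism of toric manifolds via primitive relations.

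First I would set up the dictionary. A $d$-stage Bott manifold $X$ is encoded by a strictly upper triangular integer matrix $A=(a_{ij})_{i<j}$, where the $i$-th stage of the tower is $B_i = P(\calO \oplus L_i)$ with $c_1(L_i) = \sum_{j<i} a_{ji}\, x_j$ in $H^2(B_{i-1})$. The integral cohomology has the standard presentation
\[
H^*(X) = \ZZ[x_1,\ldots,x_d] \,\big/\, \bigl(x_i^2 - x_i\, {\textstyle\sum_{j<i}} a_{ji} x_j : 1 \le i \le d\bigr),
\]
and a direct computation on the iterated projective bundle gives
\[
c_1(X) \;=\; \sum_{i=1}^d \bigl(2 + {\textstyle\sum_{j>i}} a_{ij}\bigr)\, x_i.
\]
The Fano hypothesis translates into sharp combinatorial constraints on $A$, in particular bounding the individual entries.

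Next I would analyze a graded ring isomorphism $\varphi \colon H^*(X) \to H^*(X')$ that preserves $c_1$. Since the ring is generated in degree two, $\varphi$ is determined by a matrix $M \in \GL_d(\ZZ)$ sending each $x_i$ to a $\ZZ$-linear combination of the $x_j'$. The quadratic relations are very restrictive: each image $\varphi(x_i) = y$ must satisfy $y(y - \ell) = 0$ for some linear form $\ell$ lying in the ideal generated by lower-stage classes. Following the matrix analysis developed in the Masuda-Panov and Choi-Masuda line of work, this factorization constraint forces $M$, after suitable normalization, to act as a signed permutation on a distinguished basis, modulo operations that correspond to tower-preserving modifications of $A$. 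The $c_1$-preservation hypothesis then equates certain linear functionals of the entries of $A$ with those of $A'$, and the Fano restriction rules out the ``accidental'' isomorphisms that otherwise appear for non-Fano Bott manifolds.

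Combining these reductions, one concludes that $A$ and $A'$ differ by tower-compatible signed permutations, and hence give rise to unimodularly equivalent fans. By Batyrev's theorem \cite{Bat99} this yields an algebraic isomorphism $X \cong X'$. The main obstacle, as I see it, is the middle step: showing that the cohomology ring together with $c_1$ truly pins down $A$ up to those tower-compatible operations. Without Fano-ness, cohomology isomorphisms can mix classes across different stages of the tower in ways incompatible with the underlying geometry; the essential content of the proof is that the Fano polytope condition bounds the entries of $A$ tightly enough to exclude such mixing once $c_1$ is also preserved, and a clean extraction of this rigidity from the combinatorics of Fano Bott matrices is where the work lies.
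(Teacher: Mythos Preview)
The paper under review does not prove this theorem at all: it is stated in the introduction purely as a citation of \cite[Theorem~1.2]{CLMP}, with no argument given. So there is no ``paper's own proof'' against which to compare your proposal.

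As for your proposal itself, it is a plausible outline of the architecture one expects such a proof to have, but it is not a proof. You set up the matrix presentation of $H^*(X)$ and the formula for $c_1$, and you correctly identify that the heart of the matter is showing that a $c_1$-preserving graded ring isomorphism forces the Bott matrices $A$ and $A'$ to differ only by operations that preserve the fan up to unimodular equivalence. But you do not carry out that step; you only assert that ``the Fano polytope condition bounds the entries of $A$ tightly enough to exclude such mixing'' and that ``a clean extraction of this rigidity \ldots\ is where the work lies.'' That is precisely the content of the theorem, and it is missing from your argument. In particular, you have not explained how the $c_1$-preservation hypothesis interacts with the quadratic relations to pin down the signed permutation, nor why the non-Fano counterexamples (which you allude to) are actually excluded. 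Until that analysis is written out, what you have is a strategy, not a proof.
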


\smallskip

In \cite{KM} and \cite{CMO17}, cohomological rigidity problem for real Bott manifolds is investigated. 
Here, we call a manifold $X$ a \textit{real Bott manifold} if there is a sequence of ${\RR}P^1$ bundles such that 
for each $j=1,\ldots,d$, $B_j \rightarrow B_{j-1}$ is the projective bundle of the Whitney sum of a real line bundle and the trivial real line bundle over $B_{j-1}$, where $B_0$ is a point. 
It is well known that any $d$-stage real Bott manifold is determined by a certain upper triangular matrix with its entry in $\ZZ/2=\{0,1\}$, called a \textit{Bott matrix}. 
Moreover, in \cite[Section 3]{CMO17}, three operations for Bott matrices are introduced. We say that two Bott matrices are \textit{Bott equivalent} if those can be transformed by those three matrix operations. 

The following theorem gives a complete characterization of real Bott manifolds in terms of Bott matrices and answers the cohomological rigidity problem for real Bott manifolds in some certain sense. 
\begin{thm}[{\cite[Theorem 1.1]{CMO17}}, Classification of real Bott manifolds]\label{thm:realBott}
Let $A$ and $B$ be Bott matrices and let $X(A)$ and $X(B)$ be the associated real Bott manifolds to $A$ and $B$, respectively. 
Then the following three conditions are equivalent: 
\begin{itemize}
\item[(1)] $A$ and $B$ are Bott equivalent; 
\item[(2)] $X(A)$ and $X(B)$ are affinely diffeomorphic; 
\item[(3)] $H^*(X(A)) \otimes_\ZZ \ZZ/2$ and $H^*(X(B)) \otimes_\ZZ \ZZ/2$ are isomorphic as graded rings. 
\end{itemize}
\end{thm}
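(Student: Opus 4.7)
The plan is to establish (1)$\Rightarrow$(2)$\Rightarrow$(3) as the routine direction and to concentrate effort on (3)$\Rightarrow$(1), where the content lies.

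For (1)$\Rightarrow$(2), I would use the description $X(A)=T^d/G_A$, where $T^d=(S^1)^d$ and $G_A\cong(\ZZ/2)^d$ acts freely by affine involutions whose translation and sign-change parts are read off from the columns of $A$. Each of the three Bott operations can be verified one at a time to correspond to an affine automorphism of $T^d$ that conjugates $G_A$ to $G_B$ (where $B$ is the transformed matrix): swapping rows/columns is realized by permuting coordinates of $T^d$, while the row-addition operation is realized by an appropriate element of $\GL_d(\ZZ)$ acting on $T^d$. Such an automorphism descends to an affine diffeomorphism $X(A)\to X(B)$. For (2)$\Rightarrow$(3), any diffeomorphism (affine or not) induces a graded ring isomorphism on mod-$2$ cohomology, so this step is immediate.

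For (3)$\Rightarrow$(1), I would first write down the explicit presentation
\[
H^*(X(A);\ZZ/2) \;\cong\; \ZZ/2[x_1,\dots,x_d]\Big/\Big(x_i^2 + x_i\sum_{j<i}a_{ji}x_j \;:\; i=1,\dots,d\Big),
\]
so that the entries of $A$ are visibly encoded in the quadratic relations. The task thereby becomes purely algebraic: given a graded ring isomorphism $\varphi$ between two such rings, produce a sequence of the three operations turning $A$ into $B$. The natural approach is induction on $d$, using that $\varphi$ must respect the squaring map $H^1\to H^2$ (which is linear in characteristic $2$) together with its associated annihilator structure. The indices $i$ for which $x_i^2=0$, and more generally the filtration of $H^1$ defined by the degeneracy loci of these quadratic forms, single out a distinguished ``last-stage'' generator that must correspond across $\varphi$ up to the three operations; peeling it off by a row/column swap reduces the problem to size $d-1$.

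The main obstacle is the combinatorial bookkeeping in the inductive step. When $\varphi(x_i)=\sum_j c_{ij}x'_j$ is a genuine linear combination rather than a single basis element, one must apply the row-addition operation repeatedly to absorb the off-diagonal terms, and nontrivial work is needed to show that enough of the coefficients $c_{ij}$ are forced so that this absorption actually succeeds while preserving the upper-triangular shape of the matrix throughout. Carefully tracking the interaction between the three matrix operations and the ring isomorphism, so that the inductive hypothesis can be invoked on a strictly smaller matrix without ever breaking the Bott form, is the technical core of the proof.
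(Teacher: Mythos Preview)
This theorem is not proved in the present paper at all: it is quoted in the introduction as a known background result, attributed to \cite{CMO17} (with the equivalence of (2) and (3) originally due to \cite{KM}). There is therefore no proof in this paper against which to compare your proposal.

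As a separate remark on your sketch relative to the cited sources: your plan for (1)$\Rightarrow$(2)$\Rightarrow$(3) matches the standard argument. For (3)$\Rightarrow$(1), however, the actual proof in \cite{CMO17} does not proceed by the kind of direct induction you describe, peeling off a ``last-stage'' generator via the squaring map and absorbing off-diagonal terms by row additions. Instead, it recasts Bott matrices as acyclic digraphs and introduces graph-theoretic invariants (notably local complementation and cut-rank) precisely because the naive inductive bookkeeping you flag as ``the main obstacle'' is genuinely hard to make work: an arbitrary graded ring isomorphism need not send a single generator $x_i$ to another single generator, and repeatedly applying the row-addition operation to straighten this out while maintaining the upper-triangular Bott form is not something one can push through without further structure. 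So your proposal correctly identifies where the difficulty lies, but does not yet contain the idea that resolves it.
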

Note that the equivalence of (2) and (3) was originally proved in \cite[Theorem 1.1]{KM}. 

Our motivation to organize the present paper is to obtain an analogue of Theorem~\ref{thm:realBott} for Fano Bott manifolds.

\subsection{Main Result}

The main result of the present paper is the following: 
\begin{thm}[Main Result]\label{thm:FanoBott}
Let $X$ and $X'$ be Fano Bott manifolds and let $A(X)$ and $A(X')$ be the upper triangular matrices associated to $X$ and $X'$, respectively. 
Then the following three conditions are equivalent: 
\begin{itemize}
\item[(1)] $A(X)$ and $A(X')$ are Fano Bott equivalent; 
\item[(2)] $X$ and $X'$ are diffeomorphic; 
\item[(3)] $H^*(X)$ and $H^*(X')$ are isomorphic as graded rings. 
\end{itemize}
\end{thm}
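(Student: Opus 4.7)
My plan is to establish the three implications $(1) \Rightarrow (2) \Rightarrow (3) \Rightarrow (1)$, which together yield the equivalence. The implication $(2) \Rightarrow (3)$ is immediate, since diffeomorphic smooth manifolds have isomorphic integral cohomology rings as graded rings.

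For $(1) \Rightarrow (2)$, I would analyze each of the three matrix operations defining Fano Bott equivalence and exhibit, for each, an explicit diffeomorphism between the associated Fano Bott manifolds (preferably an equivariant one, or even an isomorphism of toric varieties). Typical operations of this kind should include (i) a permutation-type operation that corresponds to reordering the stages of the Bott tower, (ii) a sign-type operation corresponding to replacing a line bundle summand in the defining projective bundle by its dual, using that $\mathbb{P}(E) \cong \mathbb{P}(E \otimes L)$ for any line bundle $L$, and (iii) a third, more delicate move exploiting the Fano constraint. In each case one writes down how the operation changes the Bott tower and checks that the total spaces are diffeomorphic; it is also essential to verify that every operation preserves the Fano condition, so that the equivalence class stays within the Fano Bott manifolds.

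The main work lies in $(3) \Rightarrow (1)$. Here I would start from the standard presentation
\[
H^*(X) \cong \ZZ[x_1,\ldots,x_d] \Big/ \Bigl( x_i^2 + x_i \sum_{j<i} a_{ji}(X)\, x_j : 1 \le i \le d \Bigr),
\]
and work by induction on the height $d$ of the Bott tower. The Fano condition sharply restricts the integer entries $a_{ij}$ of $A(X)$, which in turn restricts the shapes of the quadratic relations and thereby the possible graded ring isomorphisms $\varphi: H^*(X) \to H^*(X')$. Concretely, I would isolate the generators whose squaring behaviour marks them as coming from the \emph{last} stage of the tower, match such generators across $\varphi$ (possibly after applying matrix operations on $A(X')$ to absorb sign and relabeling ambiguities), strip off the last stage, and invoke the induction hypothesis on the resulting lower-stage Fano Bott manifolds. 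After enough such normalization, $\varphi$ should preserve the first Chern class; at that point the theorem of Cho--Lee--Masuda--Park gives that $X$ and $X'$ are isomorphic as toric varieties, and Batyrev's primitive-collection criterion translates this isomorphism into the remaining matrix equivalence claimed in (1).

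The principal obstacle will be this induction step: one must enumerate, using the Fano restriction on the $a_{ij}$, all possibilities for how $\varphi$ can send the generators of $H^*(X)$ and their squaring relations into $H^*(X')$, and check that in each case the three matrix operations are strong enough to absorb the discrepancy. This case analysis is in the spirit of the four-dimensional argument of \cite{HKM} and the real Bott analysis of \cite{CMO17}, and its success hinges on choosing the three matrix operations rich enough to generate all such reductions, yet simple enough to yield the clean combinatorial criterion of (1).
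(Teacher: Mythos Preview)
Your cyclic scheme $(1)\Rightarrow(2)\Rightarrow(3)\Rightarrow(1)$ matches the paper's, and $(2)\Rightarrow(3)$ is indeed trivial, but both nontrivial implications contain real gaps.

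For $(1)\Rightarrow(2)$: you hope to realize each matrix operation by a toric isomorphism or an equivariant diffeomorphism. This works for (Op$1$) and (Op$2^\pm$), which are unimodular transformations of the fan and hence give isomorphic varieties (as remarked after Proposition~\ref{prop:FanoBottMatrix}). But (Op$3^\pm$) is \emph{not} a unimodular transformation and does not in general yield isomorphic varieties; your bundle-twist heuristic $\mathbb{P}(E)\cong\mathbb{P}(E\otimes L)$ does not cover it. The paper instead shows that, after absorbing (Op$1$) and (Op$2^\pm$) as unimodular moves and applying suitable sign-diagonal matrices, the primitive ray vectors of the two fans differ only by signs, and then invokes Lemma~\ref{key} (from \cite{HKM}) to conclude diffeomorphism. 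That lemma is the missing ingredient in your sketch.

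For $(3)\Rightarrow(1)$: your strategy is to normalize an arbitrary ring isomorphism until it preserves $c_1$ and then quote \cite{CLMP}. Achieving $c_1$-preservation is precisely the difficulty, and you offer no mechanism for it; this step is circular as stated. The paper does not go through \cite{CLMP} at all. It encodes a Fano Bott manifold $X$ as a \emph{signed rooted forest} $T_X$, identifies the square-vanishing elements of $H^*(X)$ with the leaves of $T_X$ (Lemma~\ref{sve}), and shows that quotienting by an s.v.e.\ is leaf-cutting. A ring isomorphism then forces an isomorphism of the underlying unsigned rooted forests (Proposition~\ref{prop:tree_cohomology}); the signs are pinned down by passing to ``broom'' subtrees and comparing $\ZZ/2$-cohomology cut-ranks from \cite{CMO17} (Lemma~\ref{broom}, Proposition~\ref{leafcut}). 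Proposition~\ref{prop:FanoBottMatrix} translates the resulting forest equivalence into the three matrix operations. Your ``strip off the last stage'' induction gestures toward leaf-cutting, but misses the key subtlety that s.v.e.\ come in two forms, $x_p$ and $x_p-2x_q$, so a ring isomorphism need not send monomial generators to monomial generators; Lemma~\ref{sve} is exactly what legitimizes the induction, and nothing in your outline plays that role.
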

See Section~\ref{sec:matrix} for the precise definition of $A(X)$ and Fano Bott equivalence. 
Note that those three conditions are equivalent to the fourth condition (4). See Remark~\ref{rem:(4)}.

As an immediate corollary of Theorem~\ref{thm:FanoBott}, we conclude the following: 
\begin{cor}\label{cor:FanoBott}
Fano Bott manifolds are cohomologically rigid. 
\end{cor}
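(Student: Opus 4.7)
The plan is to derive the corollary directly from Theorem \ref{thm:FanoBott} by a bookkeeping reduction to the implication $(3) \Rightarrow (2)$; no new geometric or combinatorial argument is needed, and there is in fact no substantial obstacle beyond what was already addressed in that theorem.

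First, I would unfold the definition of cohomological rigidity given earlier in the introduction: a family $\calF$ of toric manifolds is cohomologically rigid precisely when, for every pair $X, X' \in \calF$ satisfying $H^*(X) \cong H^*(X')$ as graded rings, the manifolds $X$ and $X'$ are diffeomorphic (or homeomorphic). Specializing to the family $\calF$ of all Fano Bott manifolds, it suffices to show the following: whenever $X$ and $X'$ are Fano Bott manifolds with $H^*(X) \cong H^*(X')$ as graded rings, $X$ and $X'$ are diffeomorphic. Note that this family is well defined: every Bott manifold is a toric manifold in the sense introduced at the outset, and the Fano condition is an algebro-geometric property that carves out a subclass, so Fano Bott manifolds sit inside the class of toric manifolds to which the definition applies.

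Next, I would apply Theorem \ref{thm:FanoBott}. Given two Fano Bott manifolds $X$ and $X'$ with a graded-ring isomorphism $H^*(X) \cong H^*(X')$, the hypothesis is exactly condition $(3)$ of the theorem. Both $X$ and $X'$ have well-defined associated upper triangular matrices $A(X)$ and $A(X')$ (see Section \ref{sec:matrix}), so the ambient hypotheses of the theorem are fulfilled. The equivalence of $(1)$, $(2)$, $(3)$ then permits the implication $(3) \Rightarrow (2)$, giving that $X$ and $X'$ are diffeomorphic. This is precisely the conclusion required by the definition of cohomological rigidity, so the corollary follows.

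The only subtle point worth remarking is one of strength rather than difficulty. The definition of cohomological rigidity allows either diffeomorphism or homeomorphism as the conclusion, whereas Theorem \ref{thm:FanoBott} supplies the stronger, smooth conclusion; thus the corollary as stated is obtained with room to spare, and no separate argument passing between the topological and smooth categories is needed. The genuinely hard content — constructing an explicit diffeomorphism from a graded-ring isomorphism via the combinatorial certificate $(1)$ — lives entirely inside the proof of Theorem \ref{thm:FanoBott}, and the corollary is the statement of that theorem packaged in the vocabulary of the cohomological rigidity problem.
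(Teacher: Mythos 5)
Your proposal is correct and matches the paper, which deduces the corollary immediately from Theorem~\ref{thm:FanoBott} via the implication $(3) \Rightarrow (2)$ applied to the definition of cohomological rigidity. No further comment is needed.
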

Note that in the case $d \leq 4$, the cohomological rigidity for $d$-stage Bott manifolds is already known by \cite[Theorem 3.3]{Choi15} and \cite[Theorem 3.3]{Choi15} as well as \cite[Theorem 1.1]{HKM}. 

The key idea of the proof of Theorem~\ref{thm:FanoBott} relies on the identification of Fano Bott manifolds with \textit{rooted signed forests} (see Section~\ref{sec:signedrootedforest}).

\subsection{Structure of the paper} 
An overview of the present paper is as follows. 
In Section~\ref{sec:pre}, we recall the theory of toric varieties (e.g. primitive collections and primitive relations) 
and the description of cohomology rings of toric manifolds. We also introduce the invariants (s.v.e. and maximal basis number) on cohomology rings. 
We also give a sufficient condition (Lemma~\ref{key}) for two Bott manifolds to be diffeomorphic. 
In Section~\ref{sec:inv}, we introduce the upper triangular matrix $A(X)$ arising from a Fano Bott manifold $X$ and describe the cohomology ring $H^*(X)$ of $X$ in terms of $A(X)$. 
We also discuss s.v.e. and maximal basis number of $H^*(X)$ (Lemma~\ref{sve}). 
In Section~\ref{sec:signedrootedforest}, we associate signed rooted forests $T_X$ from Fano Bott manifolds $X$ 
and we prove a key proposition for the proof of Theorem~\ref{thm:FanoBott} (Proposition~\ref{prop:tree_cohomology}). 
In Section~\ref{sec:matrix}, we introduce three operations on the upper triangular matrices associated to Fano Bott manifolds and the notion of Fano Bott equivalence. 
We also see that those operations correspond to certain operations on the signs of some edges of the signed rooted forest (Proposition~\ref{prop:FanoBottMatrix}). 
Finally, in Section~\ref{sec:proof}, after preparing some more lemmas, we give a proof of Theorem~\ref{thm:FanoBott}.

\subsection*{Acknowledgements}
The authors would like to thank Mikiya Masuda for fruitful discussions about cohomological rigidity problems for toric manifolds. 
He also gave a lot of helpful comments on the first version of the present paper. 
A. Higashitani was supported in part by JSPS Grant-in-Aid for Scientific Research (C) 20K03513.

\bigskip

\section{Preliminaries}\label{sec:pre}

In this section, we recall the well-known description of the cohomology rings of toric manifolds. 
For our discussions, we recall the notions, \textit{primitive collections} and \textit{primitive relations}, which represent the toric variety in terms of linear relations of primitive ray vectors of the associated fan of a toric variety. 
We also introduce the invariants of cohomology rings to distinguish cohomology rings. 
At last, we give a sufficient condition for two toric manifolds to be diffeomorphic (Lemma~\ref{key}). 

\subsection{Complete nonsingular fans and toric manifolds} 

Please consult e.g. \cite{Fulton} or \cite{Oda} for the introduction to toric varieties and the associated fans. 

First, we recall the notion of complete nonsingular fans and their underlying simplicial complexes. 
A {\em fan} of dimension $d$ is a collection $\Sigma$ of rational polyhedral pointed cones in $\RR^d$ such that 
\begin{itemize}
\item[i)] each face of a cone $\sigma$ in $\Sigma$ also belongs to $\Sigma$, and 
\item[ii)] the intersection of two cones in $\Sigma$ is also a face of each of those cones. 
\end{itemize}
It is well known that the set of toric varieties of complex dimension $d$ up to algebraic isomorphism one-to-one corresponds to 
the set of fans of dimension $d$ up to unimodular equivalence. 
For a toric variety $X$, let $\Sigma_X$ be the corresponding fan. Similarly, for a fan $\Sigma$, let $X_\Sigma$ be the corresponding toric variety. 
It is known that a toric variety $X$ is compact if and only if $\Sigma_X$ is complete, i.e., $\bigcup_{\sigma \in \Sigma_X} \sigma = \RR^d$, 
and $X$ is smooth if and only if $\Sigma_X$ is nonsingular, i.e., the set of primitive ray generators of every maximal cone in $\Sigma_X$ forms a $\ZZ$-basis of $\ZZ^d$. 
Hence, the set of toric manifolds (namely, compact smooth toric varieties) one-to-one corresponds to the set of complete nonsingular fans. 

For a fan $\Sigma$, let $\Sigma_1$ be the set of primitive ray generators of $1$-dimensional cones in $\Sigma$. 
Given a complete nonsingular fan $\Sigma$, since nonsingular fans are simplicial, 
we see that $\Sigma$ has a structure of an abstract simplicial complex. 
Let $\Sigma_1=\{\vb_1,\ldots,\vb_m\}$. We define $$\calK(\Sigma)=\{I \subset [m] : \mathrm{cone}(\vb_i : i \in I) \in \Sigma\},$$ 
where $[m]=\{1,\ldots,m\}$. We call $\calK(\Sigma)$ the \textit{underlying simplicial complex} of $\Sigma$.

\subsection{Primitive collections and primitive relations}\label{sec:prim}
Next, we recall what primitive collections and primitive relations are.

\begin{defi}[{Primitive collection, \cite[Definition 2.6]{Bat91}}]
For a complete nonsingular fan $\Sigma$, we call a nonempty subset $\mathcal{P}=\{\xb_1,\ldots,\xb_k\} \subset \Sigma_1$ of $\Sigma_1$ 
a \textit{primitive collection} of $\Sigma$ if for each generator $\xb_i \in \mathcal{P}$ 
the elements of $\mathcal{P} \setminus \{\xb_i\}$ generate a $(k-1)$-dimensional cone in $\Sigma$, 
while $\mathcal{P}$ does not generate any $k$-dimensional cone in $\Sigma$. 
In other words, $\calP$ is a \textit{minimal non-face} of $\calK(\Sigma)$. Let $\PC(\Sigma)$ be the set of all primitive collections of $\Sigma$. 
\end{defi}
\begin{defi}[{Primitive relation, \cite[Definition 2.8]{Bat91}}]
For a complete nonsingular fan $\Sigma$, let $\mathcal{P}=\{\xb_1,\ldots,\xb_k\}$ be a primitive collection of $\Sigma$. 
Let $\sigma$ be the cone in $\Sigma$ of the smallest dimension containing $\xb_1+\cdots+\xb_k$ 
and let $\yb_1,\ldots,\yb_m \in \Sigma_1$ be the minimal system of primitive ray generators of $\sigma$. 
Then there exists a unique linear combination $n_1\yb_1+\cdots+n_m\yb_m$ with positive integer coefficients $n_i$ which is equal to $\xb_1+\cdots+\xb_k$. 
We call the linear relation \begin{align}\label{prim_rel} \xb_1+\cdots+\xb_k=n_1\yb_1+\cdots+n_m\yb_m \end{align} 
the \textit{primitive relation} associated with $\mathcal{P}$.

Given a primitive collection $\calP$ with its associated primitive relation \eqref{prim_rel}, 
let $\deg(\mathcal{P}) = k-(n_1+\cdots+n_m)$, which we call the \textit{degree} of $\mathcal{P}$. 
\end{defi}

We can verify whether the toric manifold $X$ is Fano or not by the degrees of primitive relations of $\Sigma_X$. 
\begin{prop}[{\cite[Proposition 2.3.6]{Bat99}}]\label{prop:Fano}
A toric manifold $X$ is Fano if and only if we have $\deg(\mathcal{P})>0$ for every primitive collection $\mathcal{P}$ of $\Sigma_X$. 
\end{prop}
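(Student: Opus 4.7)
The plan is to convert the Fano condition into a convexity condition on the fan via the toric dictionary for ampleness, and then unpack this in terms of primitive relations. First, I would invoke the standard result from toric geometry: a $T$-invariant Cartier divisor $D = \sum_{\vb \in (\Sigma_X)_1} a_\vb D_\vb$ on a toric manifold is ample if and only if the associated support function $\varphi_D \colon \RR^d \to \RR$, defined by being linear on each maximal cone of $\Sigma_X$ and by $\varphi_D(\vb) = -a_\vb$ on each primitive ray generator, is strictly convex (i.e.\ strictly bends across every wall). Applied to $-K_X = \sum_{\vb \in (\Sigma_X)_1} D_\vb$ we have $a_\vb = 1$ for every $\vb$, so the induced support function $\varphi := \varphi_{-K_X}$ satisfies $\varphi(\vb) = -1$ on each primitive ray generator. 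Hence $X$ is Fano if and only if this specific $\varphi$ is strictly convex.

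The next step is to reformulate strict convexity of $\varphi$ in terms of primitive collections. The claim is that $\varphi$ is strictly convex if and only if, for every primitive collection $\mathcal{P} = \{\xb_1, \ldots, \xb_k\}$ with primitive relation $\xb_1 + \cdots + \xb_k = n_1 \yb_1 + \cdots + n_m \yb_m$,
\begin{equation*}
\varphi(\xb_1 + \cdots + \xb_k) \;>\; \varphi(\xb_1) + \cdots + \varphi(\xb_k).
\end{equation*}
The ``only if'' direction is immediate, since by definition the elements of a primitive collection lie in no common cone, so the local bending inequality chains up to this sum. The substantive direction is ``if'': one must argue that the inequalities for primitive collections already imply strict convexity across every wall. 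I would invoke this as a classical lemma (Batyrev), whose content is that every wall-relation is subsumed by, or can be reduced to, one arising from a primitive collection via the combinatorics of minimal non-faces of $\calK(\Sigma_X)$.

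Finally, plugging in the values evaluates both sides: the right-hand side equals $-k$ since $\varphi(\xb_i) = -1$ for each $i$, while on the left, because $\yb_1, \ldots, \yb_m$ all lie in the cone $\sigma \in \Sigma_X$ (on which $\varphi$ is linear), one gets
\begin{equation*}
\varphi(\xb_1 + \cdots + \xb_k) = \varphi(n_1 \yb_1 + \cdots + n_m \yb_m) = -(n_1 + \cdots + n_m).
\end{equation*}
Thus the inequality becomes $-(n_1 + \cdots + n_m) > -k$, i.e.\ $\deg(\mathcal{P}) = k - (n_1 + \cdots + n_m) > 0$. Chaining the three equivalences yields the proposition.

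The main obstacle is the ``if'' part of the second step: reducing strict convexity across every wall of $\Sigma_X$ to the finitely many inequalities indexed by primitive collections. This is the sole non-routine ingredient; the first and third steps are dictionary translations and direct computations once the correct sign conventions are fixed.
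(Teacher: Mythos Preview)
The paper does not supply its own proof of this proposition: it is quoted verbatim as \cite[Proposition 2.3.6]{Bat99} and used as a black box, so there is nothing in the paper to compare your argument against. Your outline is the standard one and matches Batyrev's original reasoning: translate ampleness of $-K_X$ into strict convexity of the support function with $\varphi(\vb)=-1$ on ray generators, then reduce the wall-crossing inequalities to those indexed by primitive collections, and finally read off $\deg(\calP)>0$. Your identification of the only substantive step---that the inequalities over primitive collections already force strict convexity across every wall---is accurate; this is precisely the content of Batyrev's lemma and is what you would need to either prove or cite to make the argument complete.
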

Given two complete nonsingular fans $\Sigma$ and $\Sigma'$, 
we say that $\PC(\Sigma)$ and $\PC(\Sigma')$ are isomorphic if there is a bijection between $\Sigma_1$ and $\Sigma_1'$ 
which induces a bijection between not only $\PC(\Sigma)$ and $\PC(\Sigma')$ but also their primitive relations. 
\begin{prop}[{\cite[Proposition 2.1.8 and Theorem 2.2.4]{Bat99}}]\label{prop:iso}
Two toric Fano manifolds $X$ and $X'$ are isomorphic as algebraic varieties if and only if $\PC(\Sigma_X)$ and $\PC(\Sigma_{X'})$ are isomorphic. 
\end{prop}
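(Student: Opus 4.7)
The forward direction is essentially tautological: an algebraic isomorphism $X \to X'$ of toric manifolds lifts to an isomorphism of tori and hence to a unimodular equivalence $\varphi \colon \ZZ^d \to \ZZ^d$ sending $\Sigma_X$ to $\Sigma_{X'}$. Such a $\varphi$ sends the $1$-skeleton $(\Sigma_X)_1$ bijectively to $(\Sigma_{X'})_1$, preserves the underlying simplicial complex $\calK(\Sigma_X)$, and respects all $\ZZ$-linear relations among ray generators. Since primitive collections are precisely the minimal non-faces and primitive relations are specific linear identities among primitive ray generators, both structures are automatically preserved. So the real content is the converse.

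For the converse, the plan is to reconstruct a unimodular equivalence of fans from the combinatorial data $\PC(\Sigma_X) \cong \PC(\Sigma_{X'})$. First, I would observe that since $\calK(\Sigma)$ is determined by its minimal non-faces, the given bijection $\sigma \colon (\Sigma_X)_1 \to (\Sigma_{X'})_1$ induces an isomorphism of simplicial complexes $\calK(\Sigma_X) \cong \calK(\Sigma_{X'})$; in particular, it sends the collection of maximal cones of $\Sigma_X$ bijectively onto that of $\Sigma_{X'}$. Pick any maximal cone $\tau \in \Sigma_X$ with primitive ray generators $\vb_{i_1},\ldots,\vb_{i_d}$, which form a $\ZZ$-basis of $\ZZ^d$ by nonsingularity; the corresponding maximal cone $\tau' \in \Sigma_{X'}$ has ray generators $\vb'_{i_1},\ldots,\vb'_{i_d}$ which also form a $\ZZ$-basis. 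Define the linear isomorphism $T \colon \ZZ^d \to \ZZ^d$ by $T(\vb_{i_k}) = \vb'_{i_k}$.

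The heart of the proof is to show that $T$ sends every other primitive ray generator $\vb_j$ to $\vb'_j$; once this is established, $T$ carries $\Sigma_X$ onto $\Sigma_{X'}$ and we are done. The idea is to use the primitive relations inductively to express each $\vb_j$ as a $\ZZ$-linear combination of the basis $\{\vb_{i_k}\}$: for a primitive collection $\calP$ containing $\vb_j$ whose other members already lie in a cone spanned by basis vectors (or by already-reconstructed $\vb$'s), the primitive relation gives a linear expression that, by the hypothesis, matches on the $X'$ side when $\vb$'s are replaced by $\vb'$'s. Here the Fano condition is crucial: the positivity of $\deg(\calP)$ controls the combinatorics sufficiently to ensure that the resolution of primitive ray generators against the chosen basis can be performed consistently and terminates, so that the same linear expression computes both $\vb_j$ and $\vb'_j$ up to $T$.

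The main obstacle will be the inductive step that propagates the reconstruction along primitive relations. One needs to argue that the graph whose nodes are the primitive ray generators and whose edges come from primitive relations allows one to reach every ray from the starting basis $\vb_{i_1},\ldots,\vb_{i_d}$, and that the reconstructed vectors are well-defined independent of the order in which primitive relations are applied. The well-definedness is guaranteed by the hypothesis that \emph{both} the primitive collections and the associated relations (as formal integer identities) are preserved by $\sigma$, so any two derivations of the same $\vb_j$ produce identical coefficients in the two worlds. With this bookkeeping done, $T$ is the desired unimodular equivalence and Proposition~\ref{prop:iso} follows.
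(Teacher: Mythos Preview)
The paper itself does not prove this proposition at all; it is simply quoted from Batyrev~\cite{Bat99} (Proposition~2.1.8 and Theorem~2.2.4) without argument. So there is no ``paper's proof'' to compare against, and your sketch is already more than the paper provides.

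That said, your sketch has a genuine gap in the converse direction. The forward direction and the setup for the converse are fine: minimal non-faces determine the simplicial complex, so the bijection $\sigma$ induces $\calK(\Sigma_X)\cong\calK(\Sigma_{X'})$, and defining $T$ on a chosen maximal cone is the right start. The problem is the propagation step. You assert that every remaining ray $\vb_j$ can be reached from the initial basis by iteratively applying primitive relations, but you give no mechanism for this. A primitive relation $\xb_1+\cdots+\xb_k = n_1\yb_1+\cdots+n_m\yb_m$ only lets you solve for one $\xb_i$ when \emph{all} other $\xb$'s and all $\yb$'s are already known; there is no a~priori reason the ``primitive-relation graph'' is connected in the sense you need. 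The standard argument instead walks through adjacent maximal cones using \emph{wall-crossing} relations (which always exist by completeness and connectivity of the dual graph of maximal cones), and then one must check these wall relations are determined by the primitive data. Your outline skips precisely this reduction.

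A second issue is your claim that ``the Fano condition is crucial'' for termination of the reconstruction. You do not say how $\deg(\calP)>0$ is actually used, and in fact Batyrev's reconstruction of a complete nonsingular fan from its primitive collections and relations does not require the Fano hypothesis. The Fano assumption in the paper's statement reflects the context of \cite{Bat99} rather than a necessary ingredient of the argument, so invoking it as the engine of your induction is misleading and leaves the reader unsure what is really doing the work.
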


\subsection{Cohomology rings of toric manifolds and their invariants}

Next, we recall the description of the cohomology rings of toric manifolds. 

\begin{prop}[{\cite[Theorem 5.3.1]{ToricTopology}}]\label{prop:compute}
Let $X$ be a toric manifold of complex dimension $d$, let $\Sigma=\Sigma_X$ and let $\Sigma_1=\{{\bf v}_1,\ldots,{\bf v}_m\} \subset \RR^d$. 
Then the cohomology ring of $X$ can be described as follows: 
	$$H^*(X) \cong \ZZ[x_1,\ldots,x_m]/(I_X + J_X),$$
where 
\begin{align*}
I_X=\left(\prod_{i \in F} x_i : F \subset [m], \{\vb_i : i \in F\} \in \PC(\Sigma_X)\right) \text{ and }
J_X=\left(\sum_{j=1}^m {\bf v}_j^i x_j : i=1,\ldots,d \right) 
\end{align*}
and ${\bf v}_j^i$ denotes the $i$-th entry of ${\bf v}_j \in \RR^d$. 
\end{prop}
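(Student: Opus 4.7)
The plan is to establish this as the classical Danilov--Jurkiewicz description of the cohomology of a toric manifold, approached via equivariant cohomology and the Borel construction for the compact torus $T=(S^1)^d$ acting on $X$. The first step is to observe that $X$ admits a CW decomposition whose cells are the complex torus orbits indexed by cones of $\Sigma$; in particular all cells have even real dimension, so $H^*(X;\ZZ)$ is a free abelian group concentrated in even degrees. This equivariant formality guarantees that the Serre spectral sequence of the Borel fibration $X \hookrightarrow ET \times_T X \to BT$ collapses at the $E_2$-page, yielding
\begin{equation*}
H^*(X) \;\cong\; H^*_T(X) \otimes_{H^*(BT)} \ZZ.
\end{equation*}

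Next I would compute the equivariant cohomology ring. Using the presentation of $X$ as a symplectic (or GIT) quotient of the moment-angle manifold $\calZ_\Sigma$ by the residual torus $T^{m-d}$, or equivalently by a direct cellular computation on the Borel construction, one obtains
\begin{equation*}
H^*_T(X) \;\cong\; \ZZ[x_1,\ldots,x_m]/\calI_\Sigma,
\end{equation*}
where $\calI_\Sigma$ is the Stanley--Reisner ideal of $\calK(\Sigma)$, generated by all squarefree monomials $\prod_{i \in F} x_i$ with $F$ a minimal non-face. Here each $x_i$ has degree $2$ and corresponds to the equivariant Poincar\'e dual of the torus-invariant divisor associated to $\vb_i$. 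The key point is that by the very definition of a primitive collection, the family $\{F \subset [m] : \{\vb_i : i \in F\} \in \PC(\Sigma)\}$ is exactly the set of minimal non-faces of $\calK(\Sigma)$, so $\calI_\Sigma = I_X$.

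Finally, I would identify the images of the generators $t_1, \ldots, t_d$ of $H^*(BT) = \ZZ[t_1,\ldots,t_d]$ in $H^*_T(X)$. These are determined by the characteristic homomorphism of the fibration, dual to the inclusion $T \hookrightarrow T^m$ defining $X$ as a quotient of $\calZ_\Sigma$. A direct check via the Chern classes of the equivariant line bundles attached to the invariant divisors shows that $t_i$ maps to $\sum_{j=1}^m \vb_j^i x_j$. Taking the quotient by the image of $H^{>0}(BT)$ therefore introduces precisely the ideal $J_X$, producing the claimed presentation.

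The principal obstacle in this program is the identification of the equivariant cohomology with the face ring, which requires either the machinery of moment-angle manifolds and the collapse of a Leray-type spectral sequence, or a careful cell-by-cell analysis of the Borel construction. Since this is a well-established pillar of toric topology, I would in practice simply invoke \cite[Theorem 5.3.1]{ToricTopology} rather than reproduce the derivation here.
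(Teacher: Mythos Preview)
The paper does not prove this proposition at all; it is stated with a direct citation to \cite[Theorem 5.3.1]{ToricTopology} and used as a black box. Your outline of the equivariant-cohomology argument is a faithful sketch of how that cited result is established, and your concluding remark---that one should simply invoke the reference---is precisely what the paper does.
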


In order to distinguish cohomology rings up to isomorphism, we prepare the invariants on cohomology rings. 

\begin{defi}[s.v.e., maximal basis number]
Let $X$ be a toric manifold. 

A nonzero linear form in $H^*(X)$ is said to be \emph{s.v.e.} (square vanishing element) 
if it is primitive and its second power vanishes in $H^*(X)$. 

Let $V$ be the set of all s.v.e. of the cohomology ring $H^*(X)$ of $X$. Define 
$$\calB = \{ S \subset V : \text{$S$ is a part of a $\ZZ$-basis of $H^*(X)$} \}.$$
Then there exists $S_{\rm max} \in \calB$ such that $|S| \leq |S_{\rm max}| \text { for any }S \in \calB$. 
We call a set $S_{\rm max}$ a \textit{maximal basis} of s.v.e. of $H^*(X)$ and $|S_{\rm max}|$ a \textit{maximal basis number} of $H^*(X)$. 
\end{defi}

Note that the number of s.v.e. and the maximal basis number are invariants of $H^*(X)$. 
We refer the readers to \cite[Examples 2.7 and 2.9]{HKM} for examples of s.v.e and maximal basis numbers of $H^*(X)$ and how to compute them. 

\subsection{Diffeomorphism lemma}

Finally, we recall the key lemma for the proof of Theorem~\ref{thm:FanoBott}. The following lemma directly follows from \cite[Lemma 2.3]{HKM}. 
\begin{lem}[cf. {\cite[Lemma 2.3]{HKM}}]\label{key}
Let $X,X'$ be $d$-stage Fano Bott manifolds and let $\Sigma,\Sigma'$ be the associated complete nonsingular fans, respectively. 
Let $\Sigma_1=\{{\bf v}_1,\ldots,{\bf v}_{2d}\}$ and let $\Sigma_1'=\{{\bf v}_1',\ldots,{\bf v}_{2d}'\}$. 
If ${\bf v}_i=\pm {\bf v}_i'$ for each $i=1,\ldots,2d$ by reordering ${\bf v}_i$'s if necessary such that $\calK(\Sigma)$ is unchanged, then $X$ and $X'$ are diffeomorphic. 
\end{lem}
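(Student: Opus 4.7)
The plan is to deduce the lemma directly from \cite[Lemma 2.3]{HKM}, which establishes the analogous sign-flipping statement for arbitrary toric manifolds. To explain why the reduction works, I would first recall the geometric principle underlying the cited result: every toric manifold $X_\Sigma$ carries a canonical quasitoric structure in the sense of Davis--Januszkiewicz over the simple polytope dual to $\calK(\Sigma)$, with characteristic function $\lambda\colon \Sigma_1 \to \ZZ^d$ sending each ray to its primitive generator. The equivariant diffeomorphism type of this quasitoric manifold depends only on the pair $(\calK(\Sigma),\lambda)$ up to sign on each ray, i.e., only on $\calK(\Sigma)$ together with the rank-one submodules $\ZZ v_i \subset \ZZ^d$. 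In particular, replacing any of the $v_i$ by their negatives does not alter the diffeomorphism type.

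Given this general principle, the plan for the present lemma becomes a one-line reduction. After applying the reordering from the hypothesis, we have $\calK(\Sigma) = \calK(\Sigma')$, while $v_i = \pm v_i'$ gives $\ZZ v_i = \ZZ v_i'$ for every $i \in \{1,\ldots,2d\}$. Hence the quasitoric data for $X$ and $X'$ agree, and \cite[Lemma 2.3]{HKM} produces a diffeomorphism. The Fano hypothesis plays no role here; it is used elsewhere in the paper to control the primitive relations, but for the diffeomorphism statement it can be dropped.

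I do not anticipate any serious obstacle. The only point that requires care is confirming that the reordering is genuinely compatible with the quasitoric structure, in the sense that once $\calK(\Sigma)$ has been matched with $\calK(\Sigma')$ via the reindexing, the sign-relation $v_i = \pm v_i'$ still holds for the matched pairs of rays. This is exactly what is assumed in the statement, so the invocation of \cite[Lemma 2.3]{HKM} is immediate.
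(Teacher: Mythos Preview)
Your approach is correct and matches the paper exactly: the authors do not give a proof at all but simply state that the lemma ``directly follows from \cite[Lemma 2.3]{HKM}.'' Your additional explanation of the underlying quasitoric/characteristic-function mechanism is fine as commentary, but the formal reduction is precisely the one the paper intends.
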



\bigskip

\section{Fano Bott manifolds and their cohomology rings}\label{sec:inv}

In this section, we introduce upper triangular matrices $(n_{ij})$ associated to Fano Bott manifolds. 
The cohomology rings of Fano Bott manifolds can be described by using such matrices (see \eqref{eq:ideal}). 
We discuss the s.v.e. of their cohomology rings (Lemma~\ref{sve}). 
We refer the reader to \cite[Section 7.8]{ToricTopology} for the introduction to Bott manifolds.

\subsection{Upper triangular matrices associated to Fano Bott manifolds} 
Let $X$ be a $d$-stage Fano Bott manifold and let $\Sigma=\Sigma_X$. Then $|\Sigma_1|=2d$. Let $\Sigma_1=\{{\bf v}_i^\pm : i=1,\ldots,d\}$. 
In general, the underlying simplicial complex of the associated fan of any Bott manifold is the boundary complex of the cross-polytope of dimension $d$. 
Thus, the primitive collections look like $\{{\bf v}_i^+,{\bf v}_i^-\}$ for each $i$. 
Moreover, by Proposition~\ref{prop:Fano}, the associated primitive relations of $\PC(\Sigma)$ look as follows: 
\begin{equation}\label{eqeq}
\begin{split}
		{\bf v}_1^+ + {\bf v}_1^- &= {\bf v}_{\varphi(1)}^{\sigma(1)}, \\ 
		{\bf v}_2^+ + {\bf v}_2^- &= {\bf v}_{\varphi(2)}^{\sigma(2)}, \\ 
		              &\;\vdots \\
		{\bf v}_d^+ + {\bf v}_d^- &= {\bf v}_{\varphi(d)}^{\sigma(d)}, 
\end{split}
\end{equation}
where we let ${\bf v}_{d+1}^\pm={\bf 0}$, $\varphi$ is a map $\varphi : [d] \rightarrow [d+1] \setminus\{1\}$ satisfying that there is $k_i$ with $\varphi^{k_i}(i)=d+1$ for each $i \in [d]$ 
and $\sigma$ is a map $\sigma : [d] \rightarrow \{\pm\}$. 
Note that the condition ``there is $k_i$ with $\varphi^{k_i}(i)=d+1$ for each $i \in [d]$'' is derived from the linear independence of ${\bf v}_1^{\epsilon_1},\ldots,{\bf v}_d^{\epsilon_d}$ for any choice of $\epsilon_i \in \{\pm\}$. 
Up to the equivalence of $\PC(\Sigma)$, we may assume that 
\begin{align}\label{condition}
\text{$\varphi$ satisfies that $i < \varphi(i) \leq d+1$ for each $i$.} 
\end{align}
In what follows, we will always assume \eqref{condition}.

In general, $d$-stage Bott manifolds are determined by a collection of integers $(n_{ij})_{1 \leq i < j \leq d}$ arranged in an upper triangular matrix form (see \cite[Section 7.8]{ToricTopology}). 
In the case of Fano Bott manifolds, we can construct this upper triangular matrix $(n_{ij})$ from the primitive relations as follows. 
Since we have ${\bf v}_i^- = -{\bf v}_i^+ + {\bf v}_{\varphi(i)}^{\sigma(i)}$ for each $i$, by the condition \eqref{condition}, we have 
$$
{\bf v}_i^+ + {\bf v}_i^- = -{\bf v}_{\varphi(i)}^+ -{\bf v}_{\varphi^2(i)}^+ - \cdots -{\bf v}_{\varphi^{k_i-1}(i)}^+ +{\bf v}_{\varphi^{k_i}(i)}^+.
$$
Thus, we can rewrite the primitive relations \eqref{eqeq} as follows: 
\begin{table}[htb]
	\centering
	\begin{tabular}{ccccccccccc}
		${\bf v}_1^+ + {\bf v}_1^-$ &=& $n_{12}{\bf v}_2^+$ &$+$& $n_{13}{\bf v}_3^+$ &$+$& $n_{14}{\bf v}_4^+$ &$+$& $\cdots$ &$+$& $n_{1d}{\bf v}_d^+$, \\ 
		${\bf v}_2^+ + {\bf v}_2^-$ &=&                        &      & $n_{23}{\bf v}_3^+$ &$+$& $n_{24}{\bf v}_4^+$ &$+$& $\cdots$ &$+$& $n_{2d}{\bf v}_d^+$, \\ 
		${\bf v}_3^+ + {\bf v}_3^-$ &=&                        &      &                          &      & $n_{34}{\bf v}_4^+$ &$+$& $\cdots$ &$+$& $n_{3d}{\bf v}_d^+$, \\ 
		$\vdots$        &$\vdots$&                 &      &  &  &                        &      &               &      & $\vdots$            \\ 
		${\bf v}_{d-1}^+ + {\bf v}_{d-1}^-$ &=&               &      &                          &      &                          &      &               &      & $n_{d-1,d}{\bf v}_d^+$, \\ 
		${\bf v}_d^+ + {\bf v}_d^-$ &=&                       &      &                          &      &                          &      &               &      & ${\bf 0}$, 
	\end{tabular}
\end{table}
${}$\\
where $n_{ij} \in \{0,\pm 1\}$.

By \cite{Suyama}, Fano Bott manifolds are characterized in terms of $(n_{ij})_{1 \leq i<j \leq d}$. 
\begin{thm}[{\cite[Theorem 8]{Suyama}}]\label{suyama}
	Let $X$ be a $d$-stage Bott manifold and let $(n_{ij})_{1 \leq i<j \leq d}$ be the associated upper triangular matrix. 
	Then $X$ is Fano if and only if for any $p = 1, \ldots, d-1$, one of the following conditions holds: 
	\begin{align*}
	&(1) ~~ n_{p,p+1} = \cdots = n_{pd} = 0, \\
	&(2) ~~ n_{pr} = 
	\begin{cases}
	1 ~~ &(r = q)\\
	0 ~~ &(r \neq q)
	\end{cases} ~~\text{ for }r=p+1,\ldots,d, \\
	&(3) ~~ n_{pr} = 
	\begin{cases}
	0 ~~ &(r=p+1,\ldots,q-1), \\
	-1 ~~ &(r = q), \\
	n_{qr} ~~ &(r=q+1,\ldots,d). \\
	\end{cases} ~~
	\end{align*}
\end{thm}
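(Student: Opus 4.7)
The plan is to apply Batyrev's criterion (Proposition~\ref{prop:Fano}) row by row. Since $X$ is a Bott manifold, $\calK(\Sigma_X)$ is the boundary complex of the $d$-dimensional cross-polytope, so the primitive collections of $\Sigma_X$ are exactly $\calP_p = \{\vb_p^+, \vb_p^-\}$ for $p = 1, \ldots, d$. Hence $X$ is Fano iff $\deg(\calP_p) > 0$ for every $p$. Because $|\calP_p| = 2$ and the coefficients on the right-hand side of a primitive relation are positive integers, this condition reduces to requiring that for each $p$ the vector $\vb_p^+ + \vb_p^-$ is either $\mathbf{0}$ or a single primitive ray generator $\vb_q^\sigma$ of $\Sigma_X$ (equivalently, the coefficient sum in the primitive relation lies in $\{0,1\}$).

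For each $p$, I would start from the basis expansion $\vb_p^+ + \vb_p^- = \sum_{r > p} n_{pr} \vb_r^+$ in the $\ZZ$-basis $\{\vb_1^+, \ldots, \vb_d^+\}$ and split into three mutually exclusive subcases. The subcase $\vb_p^+ + \vb_p^- = \mathbf{0}$ forces $n_{pr} = 0$ for all $r > p$, which is Condition (1) and gives $\deg(\calP_p) = 2$. The subcase $\vb_p^+ + \vb_p^- = \vb_q^+$ for some $q > p$ forces $n_{pq} = 1$ and $n_{pr} = 0$ for $r \neq q$, which is Condition (2) and gives $\deg(\calP_p) = 1$. The subcase $\vb_p^+ + \vb_p^- = \vb_q^-$ is the substantive one: expanding $\vb_q^- = -\vb_q^+ + \sum_{r > q} n_{qr} \vb_r^+$ via the row-$q$ Bott relation and comparing coefficients in the basis yields $n_{pr} = 0$ for $p < r < q$, $n_{pq} = -1$, and $n_{pr} = n_{qr}$ for $r > q$, which is exactly Condition (3) and again gives $\deg(\calP_p) = 1$.

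For the converse, I would substitute each of Conditions (1)--(3) back into the basis expansion and verify that $\vb_p^+ + \vb_p^- \in \{\mathbf{0}, \vb_q^+, \vb_q^-\}$. Conditions (1) and (2) are immediate. For (3), a direct computation gives $\sum_{r > p} n_{pr} \vb_r^+ = -\vb_q^+ + \sum_{r > q} n_{qr} \vb_r^+ = -\vb_q^+ + (\vb_q^+ + \vb_q^-) = \vb_q^-$, so the primitive relation of $\calP_p$ is $\vb_p^+ + \vb_p^- = \vb_q^-$ with degree $1$, and Proposition~\ref{prop:Fano} then delivers Fanoness.

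The main obstacle is the negative-ray subcase leading to Condition (3). One must show that whenever $\vb_p^+ + \vb_p^-$ is nonzero and not of the form $\vb_q^+$, the Fano inequality forces it to collapse to a single $\vb_q^-$ rather than spread across a higher-dimensional minimal cone of $\Sigma_X$ with primitive-relation coefficient sum at least $2$; this in turn rigidly synchronizes the tail of row $p$ with the tail of row $q$ via $n_{pr} = n_{qr}$ for $r > q$. Extracting this rigid pattern from the coefficient comparison in $\{\vb_1^+,\ldots,\vb_d^+\}$, and ruling out mixed-sign expansions of $\sum_{r>p} n_{pr}\vb_r^+$ that would violate the primitive-relation positivity, is the technical heart of the argument.
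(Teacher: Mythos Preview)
The paper does not supply a proof of this statement; Theorem~\ref{suyama} is quoted verbatim from \cite[Theorem~8]{Suyama} and used as a black box in Sections~\ref{sec:inv}--\ref{sec:proof}. There is therefore no in-paper argument to compare your proposal against.

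That said, your sketch is correct and is essentially the natural argument. The reduction via Proposition~\ref{prop:Fano} to $\deg(\calP_p)>0$, together with $|\calP_p|=2$, forces the right-hand side of the primitive relation for $\calP_p$ to have total positive-integer coefficient sum $0$ or $1$, hence $\vb_p^++\vb_p^-\in\{\mathbf 0\}\cup\Sigma_1$; the trichotomy $\mathbf 0$, $\vb_q^+$, $\vb_q^-$ then yields Conditions (1), (2), (3) exactly as you describe, and the converse is the direct substitution you wrote out. One small point worth making explicit is why $q>p$ is automatic: since $\vb_p^++\vb_p^-=\sum_{r>p}n_{pr}\vb_r^+$ has vanishing coordinates in positions $1,\ldots,p$ of the basis $\{\vb_1^+,\ldots,\vb_d^+\}$, whereas $\vb_q^+=\eb^q$ and $\vb_q^-=-\vb_q^++\sum_{r>q}n_{qr}\vb_r^+$ each have a nonzero $q$-th coordinate, equality is impossible unless $q>p$. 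With that in hand, your coefficient comparison for case~(3) and the ``technical heart'' you flag both become straightforward, and no further obstacle remains.
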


Let $R^{d \times d}$ denote the set of all $d \times d$ matrices whose entries are in the set $R$. 
\begin{defi}\label{def:FB(d)}
Given a $d$-stage Fano Bott manifold $X$, let $$A(X)=(n_{ij})_{1 \leq i,j \leq d} \in \{0,\pm 1\}^{d \times d}$$ be the upper triangular matrix 
whose upper triangle part is equal to $n_{ij}$ defined as above and whose remaining parts are all $0$. 
Moreover, let 
$$\FB(d)=\{A(X) \in \{0,\pm1\}^{d \times d} : X \text{ is a $d$-stage Fano Bott manifold}\}.$$ 
Namely, $\FB(d)$ consists of all upper triangular matrices satisfying the condition in Theorem~\ref{suyama}. 
\end{defi}

\subsection{Cohomology rings of Fano Bott manifolds and their s.v.e.}
For a $d$-stage Fano Bott manifold $X$, by using the upper triangular matrix $(n_{ij})$ associated to $X$, 
the cohomology ring $H^*(X)$ can be written as follows (see, e.g., \cite[(2.5)]{CLMP}): 
\begin{equation}\label{eq:ideal}\begin{split}
&H^*(X) \cong \ZZ[x_1,\ldots,x_d]/\calI, \text{ where} \\
&\calI = (f_i : i=1,\ldots,d) \;\text{and}\\ 
&f_i =x_i^2-(n_{1i}x_1+\cdots +n_{i-1,i}x_{i-1})x_i\text{ for }i=1,\ldots,d.
\end{split}\end{equation}
For the analysis of s.v.e. of $H^*(X)$, we perform the following computations. Let $a_i \in \ZZ$. Then we have 
\begin{align*}
(a_1x_1 + \cdots + a_dx_d)^2 &= \sum_{i=1}^d a_i^2 x_i^2 + \sum_{1 \leq i < j \leq d}2a_ia_jx_ix_j \\
&=\sum_{i=1}^da_i^2f_i+\sum_{j=1}^da_j^2(n_{1j}x_1+\cdots +n_{j-1,j}x_{j-1})x_j+ \sum_{1 \leq i < j \leq d}2a_ia_jx_ix_j \\
&=\sum_{i=1}^da_i^2f_i+\sum_{1 \leq i < j \leq d}a_j(a_jn_{ij}+2a_i)x_ix_j. 
\end{align*}
Hence, 
we obtain the following: 
\begin{equation}\label{eq:NF}
\begin{split}
(a_1x_1 + \cdots + a_dx_d)^2=0 &\text{ in }H^*(X) \\ \Longleftrightarrow \; 
&a_2=0 ~~\text{or}~~ n_{12}a_2=-2a_1, \text{ and}\\
&a_3=0 ~~\text{or}~~ n_{13}a_3=-2a_1 ~~\text{or}~~ n_{23}a_3=-2a_2, \text{ and}\\
&\cdots\cdots, \\
&a_d=0 ~~\text{or}~~ n_{1d}a_d=-2a_1 ~~\text{or}~~ \cdots ~~\text{or}~~ n_{d-1,d}a_d=-2a_{d-1}. 
\end{split}
\end{equation}
Let $\mathcal{S}(H^*(X))$ be the set of all s.v.e. of $H^*(X)$. Then we see the following: 
\begin{lem}\label{sve}
	Work with the same notation as above. Then $\mathcal{S}(H^*(X))$ can be divided into three disjoint subsets 
	$\{g_i\}_{i \in I}, \{g'_i\}_{i \in I}$ and $\{h_j\}_{j \in J}$ with some $I,J \subset [d]$ satisfying the following: 
\begin{itemize}
\item[(1)] $\{g_i\}_{i \in I'} \cup \{g'_i\}_{i \in I \setminus I'} \cup \{h_j\}_{j \in J}$ is a maximal basis of $\calS(H^*(X))$ for any (possibly empty) $I' \subset I$; 
\item[(2)] $H^*(X) / (g_i) \cong H^*(X) / (g'_i)$  for each $i$. 
\end{itemize}
\end{lem}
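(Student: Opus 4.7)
The plan is to begin from the explicit criterion \eqref{eq:NF}: a primitive linear form $\sum_i a_i x_i$ is s.v.e.\ exactly when, for every pair $i < j$ with $a_j \neq 0$, the relation $n_{ij} a_j = -2 a_i$ holds. Since $n_{ij} \in \{0, \pm 1\}$, this forces $a_j = \pm 2 a_i$ whenever $n_{ij} \neq 0$ and $a_i = 0$ whenever $n_{ij} = 0$. Combined with the rigid row shapes dictated by Theorem~\ref{suyama}, this should permit an essentially explicit enumeration of all primitive s.v.e.\ and the desired partition into pairs and singletons.

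First I would introduce the parent map $\pi \colon [d] \to \{2, \ldots, d+1\}$ coming directly from Theorem~\ref{suyama}: $\pi(i) = d+1$ if row $i$ is the zero row (case~1), and otherwise $\pi(i) = q$, the column of the leading $\pm 1$ of row $i$ (cases~2 and~3). A direct inspection of Theorem~\ref{suyama} then shows that, for each $i \in [d]$, the set $\{k < i : n_{ki} \neq 0\}$ is nonempty precisely when $i$ has a child in the forest (some $v$ with $\pi(v) = i$), so $x_i^2 = 0$ in $H^*(X)$ exactly when $i$ is a leaf. Moreover, for each leaf $i$ whose parent $q := \pi(i) \neq d+1$ has no other child besides $i$, a direct check against \eqref{eq:NF} shows that $x_i - 2 n_{iq} x_q$ is also s.v.e.

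Next I would show these exhaust the primitive s.v.e. Given a primitive solution of \eqref{eq:NF} with support $S = \{i_1 < \cdots < i_k\}$, the conditions at $j = i_1$ force $n_{p, i_1} = 0$ for all $p < i_1$, so $i_1$ is a leaf. If $k \geq 2$, the conditions at $j = i_2$ with $p = i_1$ and $p \neq i_1$ force $\pi(i_1) = i_2$ and $i_2$'s only child to be $i_1$, together with $a_{i_2} = -2 a_{i_1}/n_{i_1, i_2}$. If $k \geq 3$, row $i_1$ cannot be of case~2 (which gives $n_{i_1, i_3} = 0$ and forces $a_{i_1} = 0$), so it must be of case~3 with $n_{i_1, i_3} = n_{i_2, i_3}$; the conditions at $j = i_3$ with $p = i_1$ and $p = i_2$ then both read $n_{i_2, i_3} a_{i_3} = -2 a_p$, giving $a_{i_1} = a_{i_2}$, which contradicts $a_{i_2} = 2 a_{i_1}$ obtained from $n_{i_1, i_2} = -1$. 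Hence $|S| \leq 2$, and we obtain the partition $\calS(H^*(X)) = \{g_i\}_{i \in I} \sqcup \{g'_i\}_{i \in I} \sqcup \{h_j\}_{j \in J}$ with $g_i = x_i, \; g'_i = x_i - 2 n_{i\pi(i)} x_{\pi(i)}$ (paired leaves), and $h_j = x_j$ (remaining leaves).

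To conclude, I would verify (1) and (2). For (1), $g_i - g'_i = 2 n_{i\pi(i)} x_{\pi(i)}$ is even, so $g_i$ and $g'_i$ coincide modulo $2$ and cannot both belong to a $\ZZ$-basis; any transversal of pairs together with $\{h_j\}_{j \in J}$ maps mod~$2$ to $\{x_\ell : \ell \text{ is a leaf}\}$ in $H^2(X) \otimes \FF_2$, which is $\FF_2$-independent, and adjoining $\{x_k : k \text{ non-leaf}\}$ yields an upper-triangular integer matrix of determinant $\pm 1$, so the transversal extends to a $\ZZ$-basis of $H^2(X)$. For (2), in both quotients $H^*(X)/(g_i)$ and $H^*(X)/(g'_i)$ the variable $x_i$ is eliminated (as $0$ or as $2 n_{i\pi(i)} x_{\pi(i)}$), and using that $\pi(i)$'s only child is $i$ one checks the two presentations differ at most by the sign substitution $x_{\pi(i)} \mapsto -x_{\pi(i)}$, yielding the required ring isomorphism. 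The main obstacle I foresee is the case analysis in the second step: case~3 rows of Theorem~\ref{suyama} inherit their tail from a later row, so multiple forcing relations $n_{pj} a_j = -2 a_p$ can bear on the same column $j$, and the bookkeeping needed to exclude supports of size $\geq 3$ is the technical heart of the argument.
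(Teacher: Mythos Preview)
Your proposal is correct and follows essentially the same structure as the paper's proof: classify all s.v.e.\ by the size of their support using \eqref{eq:NF}, rule out supports of size $\ge 3$, read off the partition $\{g_i\}\sqcup\{g_i'\}\sqcup\{h_j\}$, and verify (1) and (2).

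Two tactical differences are worth noting. For the exclusion of $|S|\ge 3$, the paper does \emph{not} invoke Theorem~\ref{suyama}: from $n_{pq}a_q=-2a_p$, $n_{pr}a_r=-2a_p$, $n_{qr}a_r=-2a_q$ (for any three indices $p<q<r$ in the support) one gets $n_{pq},n_{pr},n_{qr}\in\{\pm1\}$, hence simultaneously $a_r=\pm a_q$ and $a_r=\pm 2a_q$, a contradiction. Your route through the case 2/case 3 dichotomy for row $i_1$ is correct but longer. For (2), the paper first normalizes to $n_{pq}=1$ (so row $p$ is in case~2 and only $f_p$ and $f_q$ involve $x_p$), and then exhibits the explicit involution $F\colon H^*(X)\to H^*(X)$ with $F(x_p)=-x_p+2x_q$ and $F(x_i)=x_i$ otherwise, which carries the ideal $(x_p)$ to $(x_p-2x_q)$; this avoids the separate treatment of the case~3 tail that your sign-substitution argument must handle.
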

\begin{proof}
	\noindent
        {\bf (The first step)}: Let $g=a_1x_1+\cdots+a_dx_d \in \calS(H^*(X))$. 
        First, we determine what kind of $g$ appears. 

        \noindent
        \underline{The case $a_p \neq 0$ and $a_i = 0$ for any $i$ with $i \neq p$}: Then $g=x_p$ and $g^2=x_p^2=0$. 
        
        \noindent
        \underline{The case $a_p \neq 0$, $a_q \neq 0$ and $a_i = 0$ for any $i$ with $i \neq p,q$}: 
        Let $p<q$. Then $g^2=0$ implies that $n_{pq}a_q = -2a_p \neq 0$. Thus, $n_{pq} = \pm 1$, i.e., $a_q = \pm 2a_p$. 
        Moreover, $$n_{iq}a_q = -2a_i = 0 ~~(1 \leq i \neq p < q) ~~\Rightarrow~~ n_{iq} = 0 ~~(1 \leq i \neq p < q). $$
	Hence, we may assume that $n_{pq} = 1$, i.e., $a_q = -2a_p$. Namely, we have $(x_p-2x_q)^2=0$. Moreover, in this case, we have 
	$$n_{ip}a_p = -2a_i = 0 ~~(1 \leq i < p) ~~\Rightarrow~~ n_{ip} = 0 ~~(1 \leq i < p),$$
	so we obtain that $x_p^2=0$. 
        
        \noindent
        \underline{The case $a_p \neq 0$, $a_q \neq 0$ and $a_r \neq 0$ with $p<q<r$}: 
        Then it follows from \eqref{eq:NF} that 
	$$n_{pq}a_q = -2a_p \neq 0, \;\; n_{pr}a_r = -2a_p \neq 0 \text { and } n_{qr}a_r = -2a_q \neq 0. $$
	Thus, $n_{pq}, n_{pr}, n_{qr} \in \{\pm 1\}$. However, in this case, we have 
        $\begin{cases}
	a_r = \pm a_q \\ a_r = \pm 2a_q, 
	\end{cases}$
	implying that $a_q=a_r=0$, a contradiction. 
        
        Therefore, we see that $\calS(H^*(X)) \subset \{x_p : 1 \leq p \leq d \} \cup \{x_p-2x_q : 1 \leq p<q \leq d\}$. 
        
	\noindent
	{\bf (The second step)}: Let $(x_{p_1}-2x_{q_1})^2=0$ and $(x_{p_2}-2x_{q_2})^2=0$. 
        We claim that $\{p_1, q_1\} \neq \{p_2, q_2\}$ implies $\{p_1, q_1\} \cap \{p_2, q_2\} = \emptyset$. 
	By the above discussions, we may assume that $n_{p_1q_1} = 1$ and $n_{p_2q_2} = 1$, and we also have 
	\begin{align*}
	n_{iq_1} = 0 ~~(1 \leq i < q_1, i \neq p_1), ~n_{ip_1} = 0 ~~(1 \leq i < p_1), \\
	n_{iq_2} = 0 ~~(1 \leq i < q_2, i \neq p_2), ~n_{ip_2} = 0 ~~(1 \leq i < p_2). 
	\end{align*}
	Then $p_1=q_2$ and $p_2=q_1$ never happen. If $q_1=q_2$, then $p_1=p_2$. 
	If $p_1=p_2$, Theorem~\ref{suyama} (2) implies that $q_1=q_2$ since we assume $n_{p_1q_1}=n_{p_2q_2}=1$. 
        Hence, $\{p_1, q_1\} \cap \{p_2, q_2\} = \emptyset$ holds if $\{p_1, q_1\} \neq \{p_2, q_2\}$．

	\noindent
	{\bf (The third step)}: 
        Now, we consider the following subsets of $\calS(H^*(X))$: 
	\begin{align*}
	& \{g_i\}_i = \{ x_p ~:~ (x_p-2x_q)^2=0 \text{ for some }q\}, \;\; \{h_j\}_j = \{ x_p ~:~ x_p^2=0 \} \setminus \{g_i\}_i, \\
	& \{g'_i\}_i = \{ x_p-2x_q ~:~ (x_p-2x_q)^2=0 \}. 
	\end{align*}
	Since $x_p \in \{g_i\}_i$ if and only if $x_p-2x_q \in \{g_i'\}_i$ (see the first step), 
        we can simultaneously index both of the sets $\{g_i\}$ and $\{g_i'\}$ by $I \subset [d]$. 
        Moreover, it follows from the second step that the condition (1) holds. 
        
        We claim that the condition (2) also holds. More precisely, we prove the ring isomorphism $H^*(X) / (x_p) \cong H^*(X) / (x_p - 2x_q)$. 
	We may assume that $n_{pq} = 1$. By Theorem~\ref{suyama} (2), we have $n_{pi} = 0 ~~(p<i)$, and by $n_{iq} = 0 ~~(1 \leq i < q, i \neq p)$, 
	we see that a generator of $\calI$ which is divisible by $x_p$ is either $x_p^2$ or $x_q(x_q-x_p)$. 
	Therefore, the ring endomorphism $F : H^*(X) \rightarrow H^*(X)$ defined by 
	\begin{align*}
	F(x_i) = x_i ~(i \neq p), ~~~F(x_p) = -x_p + 2x_q
	\end{align*}
        is well-defined and induces an isomorphism between $H^*(X)/(x_p)$ and $H^*(X)/(x_p-2x_q)$. 
\end{proof}

\begin{rem}\label{rem:sve}
For a Fano Bott manifold, a maximal basis of $\calS(H^*(X))$ is not unique as claimed in the condition (1) of Lemma~\ref{sve}. 
However, the condition (2) of Lemma~\ref{sve} says that for any maximal basis $S_{\rm max}$ of $\calS(H^*(X))$, 
$\{H^*(X)/(f) : f \in S_{\rm max}\}$ is unique up to ring isomorphisms. 
Therefore, when we consider the family of quotient rings by s.v.e., we may only consider s.v.e. of the form $x_p$, 
namely, we do not need to treat s.v.e. of the form $x_p-2x_q$. 
\end{rem}

\bigskip

\section{Correspondence between Fano Bott manifolds and signed rooted forests}\label{sec:signedrootedforest}
In this section, we introduce the notion of signed rooted forests. 
We construct the signed rooted forest $T_{\varphi,\sigma}$ from a map $\varphi : [d] \rightarrow [d+1] \setminus \{1\}$ with \eqref{condition} and a map $\sigma : [d] \rightarrow \{\pm\}$. 
Namely, we can construct a signed rooted forest $T_X$ from a Fano Bott manifold $X$. 
By using this, we establish a correspondence between Fano Bott manifolds and signed rooted forests $T_{\varphi,\sigma}$. 
We also observe that an operation on a signed rooted forest, called \textit{leaf-cutting} by $v_\alpha$, corresponds to taking a quotient by a certain element $x_\alpha$.

We call $T=(V,E,v_0)$ a \textit{rooted tree} on the vertex set $V$ with its root $v_0$ and the edge set $E$
if $(V,E)$ is a tree, which is a connected graph having no cycle, and a vertex $v_0 \in V$, called a \textit{root}, is fixed. 
A \textit{rooted forest} is a disjoint union of rooted trees. We recall some notions on a rooted forest $T$: 
\begin{itemize}
\item A vertex $v$ (resp. $v'$) in $T$ is called a \textit{child} (resp. \textit{parent}) of a vertex $v'$ (resp. $v$) 
if $\{v',v\}$ is an edge of a connected component $T_0$ of $T$ and $v'$ is closer to the root of $T_0$ than $v$. 
\item A \textit{descendant} of a vertex $v$ in $T$ means any vertex which is either a child of $v$ or recursively a descendant of $v$. 
\item We call a vertex $v$ in $T$ a \textit{leaf} if no child is adjacent to $v$. 
\end{itemize}
We call a rooted forest \textit{signed} if $+$ or $-$ is assigned to each of its edges. 
Given an edge $e$, we denote the sign of $e$ by $\sign(e)$. 

Let $T$ and $T'$ be two rooted forests. We say that $T$ and $T'$ are \textit{isomorphic as rooted forests} 
if there is a bijection $f:V(T) \rightarrow V(T')$ between the sets of vertices which induces a bijection between the roots and between the sets of edges, 
and we call $f$ an isomorphism as \textit{signed} rooted forests if $f$ also preserves all signs of the edges.

\begin{defi}[Signed rooted forests associated to Fano Bott manifolds]
Let $\varphi : [d] \rightarrow [d+1] \setminus \{1\}$ be a map with \eqref{condition} and let $\sigma : [d] \rightarrow \{\pm\}$. 
We define the signed rooted forest $T_{\varphi,\sigma}$ from $\varphi$ and $\sigma$ as follows: 
\begin{align*}
&V(T_{\varphi,\sigma}) = \{ v_1, \ldots , v_d \}, \\ 
&E(T_{\varphi,\sigma}) = \{ \{v_{\varphi(i)}, v_i\} : 1 \leq i \leq d, \; \varphi(i) \neq d+1\} \text{ with }\sign(\{v_{\varphi(i)}, v_i\})=\sigma(i) \text{ for each $i$,}\\
&\text{the roots are $v_i$'s with $\varphi(i)=d+1$}. 
\end{align*}
As mentioned in Section~\ref{sec:inv}, we can associate the above maps $\varphi$ and $\sigma$ from Fano Bott manifolds, 
so we can construct $T_{\varphi,\sigma}$ from Fano Bott manifolds. Let $T_X=T_{\varphi,\sigma}$ be such signed rooted forest. 
\end{defi}

\begin{rem}\label{rem:tree_Bott}
Given a Fano Bott manifold $X$, we see that the signed rooted forest $T_{\varphi,\sigma}$ associated to $X$ constructed in the above way is well-defined. 
In fact, for two Fano Bott manifolds $X$ and $X'$ and their associated signed rooted forests $T_X$ and $T_{X'}$, 
it is straightforward to check that the equivalence between $\PC(\Sigma_X)$ and $\PC(\Sigma_{X'})$, which is a bijection between $({\Sigma_X})_1$ and $({\Sigma_{X'}})_1$, 
directly implies the isomorphism between two corresponding signed rooted forests $T_X$ and $T_{X'}$, which is a bijection between $V(T_X)$ and $V(T_{X'})$. 

On the other hand, given a signed rooted forest $T$, we can reconstruct the primitive relations associated to $T$, 
so we can associate a Fano Bott manifold from $T$. Let $X_T$ be the Fano Bott manifold associated to a signed rooted forest $T$. 
\end{rem}

\begin{ex}\label{ex:rooted_forest}
(1) Let us consider the Fano Bott manifold $X$ associated to the following primitive relations: 
\begin{align*}
&{\bf v}_1^+ + {\bf v}_1^- = {\bf v}_2^+, \\
&{\bf v}_2^+ + {\bf v}_2^- = {\bf v}_5^-, \\
&{\bf v}_3^+ + {\bf v}_3^- = {\bf v}_4^-, \\
&{\bf v}_4^+ + {\bf v}_4^- = {\bf v}_5^+, \\
&{\bf v}_5^+ + {\bf v}_5^- = {\bf 0}. 
\end{align*}
Then the associated signed rooted forest $T_X$ looks as follows. 
\begin{center}
\begin{tikzpicture}[]
\node[level 1, label=above:{$v_5$}] {}
child {
	node[level 2, label=left:{$v_2$}] {}
	child {
		node[level 3, label=below:{$v_1$}] {}
		edge from parent node[left]{$+$}
	}
	edge from parent node[left]{$-$}
}
child {
	node[level 2, label=right:{$v_4$}] {}
	child {
		node[level 3, label=below:{$v_3$}] {}
		edge from parent node[right]{$-$}
	}
	edge from parent node[right]{$+$}
};
\end{tikzpicture}
\end{center}

\noindent
(2) Let us consider the Fano Bott manifold $X$ associated to the following primitive relations: 
\begin{align*}
&{\bf v}_1^+ + {\bf v}_1^- = {\bf v}_3^+, \\
&{\bf v}_2^+ + {\bf v}_2^- = {\bf v}_3^-, \\
&{\bf v}_3^+ + {\bf v}_3^- = {\bf 0}, \\
&{\bf v}_4^+ + {\bf v}_4^- = {\bf v}_5^+, \\
&{\bf v}_5^+ + {\bf v}_5^- = {\bf 0}. 
\end{align*}
Then the associated signed rooted forest $T_X$ looks as follows. 

\begin{center}
\begin{tikzpicture}[]
\node[level 1, label=above:{$v_3$}] {}
child {
	node[level 2, label=below:{$v_1$}] {}
	edge from parent node[left]{$+$}
}
child {
	node[level 2, label=below:{$v_2$}] {}
	edge from parent node[right]{$-$}
};
\end{tikzpicture}
\begin{tikzpicture}[]
\node[level 1, label=above:{$v_5$}] {}
child {
	node[level 2, label=below:{$v_4$}] {}
	edge from parent node[right]{$+$}
};
\end{tikzpicture}
\end{center}
\end{ex}

\begin{rem}\label{rem:forest_matrix}
Let $X$ be a $d$-stage Fano Bott manifold and let $A(X)=(n_{ij})$ be the upper triangular matrix associated to $X$. 
Then Theorem~\ref{suyama} claims that $A(X)$ is completely determined by the left-most nonzero entry of each row. 
Moreover, the left-most nonzero entry of each row, which is $1$ or $-1$, 
one-to-one corresponds to the signed edge of the associated signed rooted forest $T_X$. Namely, we see that 
\begin{equation}\label{eq:signed_edge}
\begin{split}
&n_{i,i+1}=\cdots=n_{i,j-1}=0, \; n_{ij}=1 \text{ (resp. $n_{ij}=-1$)} \\
&\quad\quad\Longleftrightarrow\;\; \{v_j,v_i\} \in E(T_X) \text{ and }\sign(\{v_j,v_i\})=+ \text{ (resp. $\sign(\{v_j,v_i\})=-$)}
\end{split}
\end{equation}
Furthermore, we can read off all entries of $(n_{ij})$ from $T_X$ as follows: for $1 \leq i<j \leq d$, let 
\begin{align*}
n_{ij}=\begin{cases}
1, \;&\text{if }\{v_i,v_j\} \in E(T_X) \text{ and }\sign(\{v_i,v_j\})=+, \\
1, \;&\text{if there is an upward path $(v_{i_0},\ldots,v_{i_k})$ from $v_i=v_{i_0}$ to $v_j=v_{i_k}$ such that} \\
&\text{$\sign(\{v_{i_{\ell-1}},v_{i_\ell}\})=-$ for $\ell=1,\ldots,k-1$ and $\sign(\{v_{i_{k-1}},v_{i_k}\})=+$}, \\
-1, \;&\text{if there is an upward path from $v_i$ to $v_j$ all of whose edges have the sign $-$}, \\
0, \;&\text{otherwise}, 
\end{cases}
\end{align*}
where we call a path $(v_{i_0},v_{i_1},\ldots,v_{i_k})$ in the signed rooted forest \textit{upward} if $v_{i_j}$ is a child of $v_{i_{j+1}}$ for all $j=1,\ldots,k-1$. 
For Example~\ref{ex:rooted_forest} (1), we see that 
\begin{align*}
n_{12}=n_{35}=n_{45}=1, \; n_{25}=n_{34}=-1, \; n_{ij}=0 \text{ otherwise}. 
\end{align*}

Let $T_1,\ldots,T_k$ be the connected components of a signed rooted forest $T$. 
It then follows from the above construction that the upper triangular matrix associated to $T$ becomes a direct sum of those of $T_1,\ldots,T_k$. 
\end{rem}

Given a Fano Bott manifold $X$, we observe the following: 
\begin{itemize}
\item By definition of $T_{\varphi,\sigma}$, we see that $v_{\alpha}$ is a leaf of $T_{\varphi,\sigma}$ if and only if $\alpha \not\in \varphi([d])$. 
Thus, for a leaf $v_\alpha$ of $T_{\varphi,\sigma}$, we obtain that $n_{i\alpha} = 0$ for each $i=1,\ldots,\alpha-1$, i.e., $x_{\alpha}^2=0$ in $H^*(X)$. 
\item On the other hand, if $x_{\alpha}^2=0$ in $H^*(X)$, then $n_{1\alpha} = \cdots = n_{\alpha-1,\alpha} = 0$ by \eqref{eq:NF}. 
Thus, we obtain that $\alpha \not\in \varphi([d])$, i.e., $v_{\alpha}$ is a leaf of $T_X$. 
\end{itemize}
Therefore, the set of leaves of $T_X$ one-to-one corresponds to $\{ x_i : x_i^2=0 ~\text{in}~ H^*(X) \}$.

Now, we consider the quotient ring $H^*(X) / (x_{\alpha})$ where $x_{\alpha}^2=0$ in $H^*(X)$. 
Then we have $n_{i\alpha}=0$ for each $i=1,\ldots,\alpha-1$. 
By $H^*(X) / (x_{\alpha}) \cong \mathbb{Z}[x_1, \ldots, x_d] / (\mathcal{I}_{X}+(x_{\alpha}))$, where $\calI_X=I_X+J_X$ in Proposition~\ref{prop:compute},
we see that $H^*(X) / (x_{\alpha}) \cong \mathbb{Z}[x_1, \ldots, \hat{x}_{\alpha}, \ldots, x_d] / \overline{\mathcal{I}}_{X}$, 
where $\overline{\calI}_X$ is the image of $\calI_X$ by the natural projection $\ZZ[x_1, \ldots, x_d] \rightarrow \ZZ[x_1, \ldots, \hat{x}_{\alpha}, \ldots, x_d]$. 
Then $\mathbb{Z}[x_1, \ldots, \hat{x}_{\alpha}, \ldots, x_d] / \overline{\mathcal{I}}_{X}$ is isomorphic to the cohomology ring of a Fano Bott manifold whose primitive relations are 
\begin{align*}
{\bf v}_1^+ + {\bf v}_1^- &= {\bf v}_{\overline{\varphi}(1)}^{\overline{\sigma}(1)}, \\
&\vdots \\
{\bf v}_{\alpha-1}^+ + {\bf v}_{\alpha-1}^- &= {\bf v}_{\overline{\varphi}(\alpha-1)}^{\overline{\sigma}(\alpha-1)}, \\
{\bf v}_{\alpha+1}^+ + {\bf v}_{\alpha+1}^- &= {\bf v}_{\overline{\varphi}(\alpha+1)}^{\overline{\sigma}(\alpha+1)}, \\
&\vdots \\
{\bf v}_d^+ + {\bf v}_d^- &= {\bf v}_{\overline{\varphi}(d)}^{\overline{\sigma}(d)},
\end{align*}
where $\overline{\varphi} : [d] \setminus \{\alpha\} \rightarrow [d+1] \setminus \{1,\alpha\}$ (resp. $\overline{\sigma} : [d] \setminus \{\alpha\} \rightarrow \{\pm\}$) 
is defined by $\overline{\varphi}(i)=\varphi(i)$ (resp. $\overline{\sigma}(i)=\sigma(i)$). 
This implies that $T_{\overline{X}}$ is isomorphic to $T_X \setminus v_{\alpha}$ as signed rooted forests.

Therefore, when $x_{\alpha}^2=0$ or $(x_\alpha-2x_\beta)^2=0$ in $H^*(X)$, 
taking the quotient $H^*(X) / (x_{\alpha})$ (which is isomorphic to $H^*(X)/(x_\alpha-2x_\beta)$) 
is equivalent to a leaf-cutting by $v_\alpha$ (see Remark~\ref{rem:sve}). 
Moreover, Lemma~\ref{sve} says that for two Fano Bott manifolds $X$ and $X'$, 
$H^*(X) \cong H^*(X')$ implies $\{ H^*(X) / (x_i) : x_i^2=0 \} \cong \{ H^*(X') / (x_i') : x_i'^2=0 \}$. 
These mean that the number of leaves of $T_X$ is equal to the maximal basis number of $H^*(X)$. 

The following proposition will play a crucial role in the proof of Theorem~\ref{thm:FanoBott}. 
\begin{prop}\label{prop:tree_cohomology}
Let $X$ and $X'$ be two Fano Bott manifolds. Assume that $H^*(X) \cong H^*(X')$ and let $F:H^*(X) \rightarrow H^*(X')$ be a ring isomorphism. 
\begin{itemize}
\item[(1)] $F$ induces an isomorphism between $T_X$ and $T_{X'}$ as rooted forests. 
\item[(2)] Let $f:T_X \rightarrow T_{X'}$ be an isomorphism induced by $F$. 
Take a rooted subtree $T_0 \subset T_X$ whose root is the same as that of $T_X$. Then $H^*(X_{T_0}) \cong H^*(X_{f(T_0)})$. 
\end{itemize}
\end{prop}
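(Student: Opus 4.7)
The plan is to prove (1) and (2) simultaneously by induction on $d = |V(T_X)|$; the base case $d = 1$ is immediate.

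For the inductive step of (1), pick any leaf $v_\alpha$ of $T_X$. By the leaf-cutting discussion preceding the proposition, $x_\alpha^2 = 0$, so $x_\alpha \in \calS(H^*(X))$ and hence $F(x_\alpha) \in \calS(H^*(X'))$. By Lemma~\ref{sve} together with Remark~\ref{rem:sve}, there is a unique leaf $v_\beta$ of $T_{X'}$ such that $F$ descends to a ring isomorphism
\[
\bar F \colon H^*(X)/(x_\alpha) \xrightarrow{\sim} H^*(X')/(x_\beta),
\]
which, via the leaf-cutting correspondence, identifies with an isomorphism $H^*(X_{T_X \setminus v_\alpha}) \cong H^*(X_{T_{X'} \setminus v_\beta})$. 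The inductive hypothesis yields a rooted-forest isomorphism $\bar f \colon T_X \setminus v_\alpha \xrightarrow{\sim} T_{X'} \setminus v_\beta$ induced by $\bar F$, which I extend by setting $f(v_\alpha) := v_\beta$. Verifying that $f$ respects the rooted-forest structure at $v_\alpha$ reduces to: (i) $v_\alpha$ is isolated (both leaf and root) in $T_X$ if and only if $v_\beta$ is isolated in $T_{X'}$; and (ii) when $v_\alpha$ is non-isolated with parent $v_\gamma$ in $T_X$, the vertex $v_\beta$ has parent $\bar f(v_\gamma)$ in $T_{X'}$.

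Both (i) and (ii) demand intrinsic, ring-theoretic descriptions of how a leaf attaches to the rest of the forest. For (i), isolation of $v_\alpha$ is equivalent to the quotient map $H^*(X) \twoheadrightarrow H^*(X)/(x_\alpha)$ admitting a graded-ring section, or equivalently to a tensor decomposition $H^*(X) \cong \ZZ[t]/(t^2) \otimes H^*(X)/(x_\alpha)$ with $t \mapsto x_\alpha$; this property is preserved by $F$, so $v_\beta$ is isolated exactly when $v_\alpha$ is. For (ii), once the rooted-tree order on $T_X \setminus v_\alpha$ has been identified with that on $T_{X'} \setminus v_\beta$ by induction, the parent $v_\gamma$ can be recovered as the tree-order minimum of the set of generators whose quadratic defining relation in $H^*(X)$ is altered when reduced modulo $x_\alpha$; by Theorem~\ref{suyama} this set consists of $v_\gamma$ together with certain ancestors of $v_\gamma$, and both the set and its minimum are preserved by $F$. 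I expect the intrinsic detection of leaf-attachment in (i)--(ii) to be the main technical obstacle of the proof.

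For (2), if $T_0 = T_X$ the claim reduces to $H^*(X) \cong H^*(X')$. Otherwise, since $T_0$ contains every root of $T_X$ and is closed under taking parents, the complement $T_X \setminus T_0$ is nonempty and closed under taking descendants, so a vertex of maximal depth in $T_X \setminus T_0$ is necessarily a leaf $v_\alpha$ of $T_X$. Running the part (1) construction from this $v_\alpha$ produces $f$ with $f(v_\alpha) = v_\beta \notin f(T_0)$ and $\bar f = f|_{T_X \setminus v_\alpha}$, together with $\bar F \colon H^*(X_{T_X \setminus v_\alpha}) \cong H^*(X_{T_{X'} \setminus v_\beta})$. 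Since $T_0 \subset T_X \setminus v_\alpha$, applying the inductive hypothesis for (2) to the triple $(\bar F, \bar f, T_0)$ yields $H^*(X_{T_0}) \cong H^*(X_{\bar f(T_0)}) = H^*(X_{f(T_0)})$, completing the induction.
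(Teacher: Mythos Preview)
Your overall strategy---induct by removing a leaf, use Lemma~\ref{sve} and Remark~\ref{rem:sve} to match it with a leaf on the other side, then apply the leaf-cutting correspondence to reduce to a smaller forest---coincides with the paper's. (The paper inducts on the number of leaves rather than on $d$, but that is immaterial.) Your treatment of part~(2), once~(1) is available, is also essentially the paper's.

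The genuine gap is in step~(ii). Your proposed characterization of the parent $v_\gamma$ as ``the tree-order minimum of the set of generators whose quadratic defining relation in $H^*(X)$ is altered when reduced modulo $x_\alpha$'' refers to the particular presentation~\eqref{eq:ideal} and its distinguished generators $x_j$, neither of which $F$ is obliged to respect. Induction hands you only $\bar F$ on the \emph{quotient} $H^*(X)/(x_\alpha)$ and the forest isomorphism $\bar f$ on $T_X\setminus v_\alpha$; nothing lets you identify the element $x_j\in H^*(X)$ (for $j\neq\alpha$) with any particular element of $H^*(X')$, so the condition ``$n_{\alpha j}\neq 0$'' cannot be transported across $F$. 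Even granting that the set in question always consists of $v_\gamma$ together with some of its ancestors (so that its tree-minimum is indeed $v_\gamma$), you would still need $\bar f$ to carry it to the analogous set on the $X'$ side---and that is exactly the parent correspondence you are trying to prove, so the argument is circular. A similar issue lurks in~(i), though there an intrinsic reformulation is at least plausible.

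The paper avoids intrinsic detection altogether by exploiting the bijection $F$ induces on \emph{all} leaves simultaneously. If removing $v_{l_1}$ turns its parent $v$ into a new leaf of $T_X\setminus v_{l_1}$, then since the induced $\bar F$ still matches the surviving old leaves pairwise, the one new leaf $v$ is forced to match the one new leaf on the other side, namely the parent of $v'_{l_1'}$. If instead $v$ keeps another child, choose a second leaf $v_{l_2}$ descending from $v$ and remove \emph{that} one; the resulting isomorphism $h\colon T_X\setminus v_{l_2}\to T_{X'}\setminus v'_{l_2'}$ (induced by the same $F$) still sends the leaf $v_{l_1}$ to $v'_{l_1'}$, and being a rooted-forest isomorphism it therefore sends the parent $v$ of $v_{l_1}$ to the parent of $v'_{l_1'}$. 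This cross-referencing between two different leaf removals is the device that replaces your step~(ii).
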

\begin{proof}
(1) Since $\{x_\alpha \in H^*(X) : x_\alpha^2=0\}$ one-to-one corresponds to the leaves of $T_X$, 
and since we may assume that $F$ sends $\{x_\alpha \in H^*(X) : x_\alpha^2=0\}$ to $\{x_\alpha' \in H^*(X') : x_\alpha'^2=0\}$ (see the above discussion), 
we obtain a bijection $f$ between the leaves of $T_X$ and those of $T_{X'}$ induced by $F$, as discussed above.

Let $v_{l_1},\ldots,v_{l_k}$ be the leaves of $T_X$ and let $v_{l_i'}'=f(v_{l_i})$ ($i=1,\ldots,k$). 
Remark that taking a quotient by $x_{l_i}$ is equivalent to taking a quotient by $x_{l_i'}'$. We prove the assertion by induction on $k$. 

Let $k=1$. Then $F$ induces the correspondence $v_{l_1}$ and $v_{l_1'}'$. 
Consider $\overline{F} : H^*(X)/(x_{l_1}) \rightarrow H^*(X')/(x_{l_1'}')$. Since $\overline{F}$ is also an isomorphism 
and $H^*(X)/(x_{l_1})$ (resp. $H^*(X')/(x_{l_1'}')$) corresponds to a rooted tree $T_X \setminus v_{l_1}$ (resp. $T_{X'} \setminus v_{l_1'}'$) which contains only one leaf, 
we also obtain the correspondence between their leaves, which are the parents of $v_{l_1}$ and $v_{l_1'}'$, respectively. 
By repeating this procedure, we obtain a bijection of all vertices of $T_X$ and $T_{X'}$ which induces an isomorphism between $T_X$ and $T_{X'}$. 

Let $k>1$. Then $F$ gives a bijection between the leaves of $T_X$ and $T_{X'}$. Take one leaf $v_{l_1}$ 
and consider $\overline{F} : H^*(X)/(x_{l_1}) \rightarrow H^*(X')/(x_{l_1'}')$. Note that $\overline{F}$ gives a bijection between the leaves of $T_X \setminus v_{l_1}$ and $T_{X'} \setminus v_{l_1'}'$. 
Since $\overline{F}$ is induced from $F$, we see that $\overline{F}$ preserves the bijectivity between $\{v_{l_2},\ldots,v_{l_k}\}$ and $\{v_{l_2'}',\ldots,v_{l_k'}'\}$. 
Note that the number of leaves of $T_X \setminus v_{l_1}$ is either the same as that of $T_X$ or minus one. 
\begin{itemize}
\item When the number of leaves of $T_{X} \setminus v_{l_1}$ decreases from that of $T_{X}$, by the hypothesis of induction, 
we obtain an isomorphism $g:T_{X} \setminus v_{l_1} \rightarrow T_{X'} \setminus v_{l_1'}'$. 
Let $v$ (resp. $v'$) be the parent of $v_{l_1}$ (resp. $v_{l_1'}'$). Once we can see that 
\begin{align}\label{havetoshow}
g(v)=v', 
\end{align}
by combining the correspondence between $v_{l_1}$ and $v_{l_1'}'$, we obtain an isomorphism $T_{X}$ and $T_{X'}$ induced by $F$. 
\item When the number of leaves of $T_{X} \setminus v_{l_1}$ stays the same, 
we see that new leaves $v_{l''}$ and $v_{l'''}$ appear both in $T_{X} \setminus v_{l_1}$ and $T_{X'} \setminus v_{l_1'}'$, respectively. 
Since $\overline{F}$ still gives a bijection between $\{v_{l_2},\ldots,v_{l_k}\}$ and $\{v_{l_2'}',\ldots,v_{l_k'}'\}$, $v_{l''}$ should correspond to $v_{l'''}$. 
Consider $H^*(X)/(x_{l_1},x_{l''})$ and $H^*(X')/(x_{l_1'}',x_{l'''}')$. By the same discussion, we can repeat this procedure until the number of leaves decreases. Hence, the assertion follows. 
\end{itemize}
Our remaining task is to show \eqref{havetoshow}. Note that there is another leaf, say $v_{l_2}$, which is a descendant of $v$. 
When $v_{l_2}$ is a child of $v$, by the hypothesis of induction, we obtain an isomorphism $h:T_X \setminus v_{l_2} \rightarrow T_{X'} \setminus v_{l_2'}'$. 
Remark that $h$ gives a bijection between $\{v_{l_1},v_{l_3},\ldots,v_{l_k}\}$ and $\{v_{l_1'}',v_{l_3'}',\ldots,v_{l_k'}'\}$ since $h$ comes from the isomorphism between $H^*(X)/(x_{l_2})$ and $H^*(X')/(x_{l'_2}')$. 
Thus, in particular, $h$ sends $v_{l_1}$ to $v_{l_1'}'$. Since $v$ (resp. $v'$) is the unique vertex of $T_X$ (resp. $T_{X'}$) adjacent to $v_{l_1}$ (resp. $v_{l_1'}'$), we obtain that $h(v)=v'$. 
Even if $v_{l_2}$ is not a child of $v$, by removing the leaves until the number of leaves decreases, we obtain the same conclusion. 
Hence, we see that $h(v)=g(v)=v'$. 

\noindent
(2) Let $f:T_X \rightarrow T_{X'}$ be an isomorphism constructed in the above way. 
For any rooted subtree $T_0 \subset T_X$ whose root is the same as that of $T_X$, 
since $$H^*(X_{T_0}) \cong H^*(X)/(x_\alpha : v_\alpha \in V(T_X \setminus T_0))$$ and 
$f$ sends $V(T_X \setminus T_0)$ to $V(T_{X'} \setminus f(T_0))$ by construction of $f$, 
we obtain that $H^*(X_{T_0}) \cong H^*(X_{f(T_0)})$, as required. 
\end{proof}

\bigskip

\section{Three operations on matrices and Fano Bott equivalence}\label{sec:matrix}

In this section, we introduce the equivalence relation in $\calF\calB(d)$ (see Definition~\ref{def:FB(d)}), which we call Fano Bott equivalence. 
The notion of Fano Bott equivalence is derived from Bott equivalence defined in \cite{CMO17}.

\begin{defi}[Three operations on $\calF\calB(d)$]
For $A \in \ZZ^{d \times d}$ and $i \in [d]$, let $A^i$ (resp. $A_i$) denote the $i$-th row (resp. column) vector of $A$. 
Let ${\bf e}_i$ (resp. ${\bf e}^i$) denotes the $i$-th unit column (resp. row) vector. 
Given a permutation $\pi$ on $[d]$, we define a permutation matrix $P$ by setting $P_j=\eb_{\pi(j)}$ for each $j$. 

\bigskip

\noindent
{\bf (Op$1$)}: For a permutation matrix $P$ corresponding to a permutation $\pi$ on $[d]$, we define a map $\Phi_P : \ZZ^{d \times d} \rightarrow \ZZ^{d \times d}$ by $\Phi_P(A):=PAP^{-1}$ for $A \in \ZZ^{d \times d}$. 
Namely, we see that $A_j^i=(\Phi_P(A))_{\pi(j)}^{\pi(i)}$. 

\bigskip

\noindent
{\bf (Op$2^\pm$)}: For $k \in [d]$, we define a map $\Phi_k^\pm:\ZZ^{d \times d} \rightarrow \ZZ^{d \times d}$ as follows: 
for $A \in \ZZ^{d \times d}$, we multiply $-1$ to the $k$-th column and add the $k$-th column times $(k,j)$-entry to the $j$-th column for each $j \in [d] \setminus \{k\}$. 
Namely, we see that $$(\Phi_k^\pm(A))_j=\begin{cases}
-A_k, &\text{ if }j=k, \\
A_j+A^k_j A_k, &\text{ otherwise}. 
\end{cases}$$ 
Notice that $\Phi_k^\pm$ is nothing but a kind of unimodular transformation. 

\bigskip

\noindent
{\bf (Op$3^\pm$)}: Given $A \in \ZZ^{d \times d}$, assume that $A^\ell={\bf 0}$ and $A^k=\pm{\bf e}^\ell$ for some $k$ and $\ell$. 
Then we define $\Phi^\pm_{k,\ell}(A)$ as follows: we multiply $-1$ to $A^k_\ell$, and multiply $A_k^i$ to $A_\ell^i$ if $A_k^i \neq 0$ for $i \in [d] \setminus \{\ell\}$, 
and the other entries stay the same. Namely, we see that $$(\Phi_{k,\ell}^\pm(A))_\ell^i=\begin{cases}
-A_\ell^k, &\text{ if }i=k, \\
A_k^i \cdot A_\ell^i, &\text{ if }A_k^i \neq 0, \\
A_\ell^i, &\text{ otherwise}, 
\end{cases}\;\;\text{ and }(\Phi_{k,\ell}^\pm(A))_j=A_j \text{ for }j \neq \ell. $$
\end{defi}
\begin{defi}[Fano Bott equivalence]
Given two matrices $A$ and $A'$ in $\FB(d)$, we say that $A$ and $A'$ are \textit{Fano Bott equivalent} 
if $A$ can be transformed into $A'$ through a sequence of the three operations (Op$1$), (Op$2^\pm$) and (Op$3^\pm$). 
\end{defi}

\begin{prop}
Let $A \in \FB(d)$. Then $\Phi_k^\pm(A) \in \FB(d)$ for any $k \in [d]$, and 
$\Phi_{k,\ell}^\pm(A) \in \FB(d)$ for any $k,\ell \in [d]$ satisfying $A^\ell={\bf 0}$ and $A^k={\bf e}^\ell$. 
\end{prop}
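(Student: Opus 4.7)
The plan is to translate each matrix operation into a combinatorial move on the signed rooted forest $T$ corresponding to $A$ via Remark~\ref{rem:forest_matrix}, and then invoke Remark~\ref{rem:tree_Bott}: any signed rooted forest determines a Fano Bott manifold, whose associated upper triangular matrix automatically lies in $\FB(d)$. The key technical input is the path formula in Remark~\ref{rem:forest_matrix}, which reads off each entry $n_{ij}$ from an upward path in $T$ together with the signs of its edges.

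For $\Phi_k^\pm$, I would show that the operation corresponds to simultaneously flipping the sign of every edge in $T$ joining $v_k$ to a child of $v_k$; geometrically this is just the swap $\mathbf{v}_k^+ \leftrightarrow \mathbf{v}_k^-$ of primitive rays, which does not alter the underlying toric variety. The verification is a row-by-row analysis based on Theorem~\ref{suyama}. Rows $p$ with $A^p_k = 0$ are left unchanged. Rows $p$ whose pointer is exactly $k$ swap between case (2) (when $A^p_k = 1$) and case (3) (when $A^p_k = -1$), and in the newly produced case (3) one checks that the entries of $\Phi_k^\pm(A)$ to the right of column $k$ in row $p$ agree with those in row $k$. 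For rows $p$ of case (3) whose pointer $q$ satisfies $q < k$ but $A^p_k \neq 0$, the leftmost nonzero entry $-1$ in column $q$ is preserved, and the relation $n_{pr} = n_{qr}$ of Theorem~\ref{suyama}(3) is shown to persist for $\Phi_k^\pm(A)$ by directly comparing $(\Phi_k^\pm(A))^p_r = A^p_r + A^k_r A^p_k$ in rows $p$ and $q$, using $A^p_k = A^q_k$ and $A^p_r = A^q_r$ for all $r > q$.

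For $\Phi_{k,\ell}^+$, the hypothesis translates via Remark~\ref{rem:forest_matrix} to: $v_\ell$ is a root of $T$ and $v_k$ is a child of $v_\ell$ joined by an edge of sign $+$ (the case $\Phi_{k,\ell}^-$ is the inverse move). I would show that the operation flips the sign of this single edge while leaving the rest of $T$ intact. Since only column $\ell$ of the matrix is altered, the check reduces to confirming that each new entry in column $\ell$ matches the path formula applied to the modified tree. The rows whose entry in position $(p,\ell)$ actually changes are exactly those with $A^p_k = -1$, corresponding to vertices $v_p$ reached from $v_k$ by an upward path consisting entirely of $-$ edges; for these, the sign of $n_{p\ell}$ indeed flips, as prescribed by the formula $(\Phi_{k,\ell}^+(A))^p_\ell = A^p_k A^p_\ell = -A^p_\ell$, which is precisely the correction predicted by the path formula after $\{v_k,v_\ell\}$ is flipped.

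The main obstacle is the recursive nature of case (3) of Theorem~\ref{suyama}, which couples the entries of a row to those of its pointer-row and iteratively to higher ancestors. In the analysis of $\Phi_k^\pm$ one must track this carefully when several rows along a common downward chain in $T$ rooted at $v_k$ are affected simultaneously. The cleanest way to manage this is to organize the case analysis along downward chains from $v_k$ and exploit the identity $n_{pr} = n_{qr}$ to propagate the conclusion automatically, reducing the verification to a purely local check at the edges incident to $v_k$.
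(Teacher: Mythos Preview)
Your proposal is correct. For $\Phi_k^\pm$, despite framing the plan through the signed rooted forest, what you actually carry out is the same row-by-row verification against the three cases of Theorem~\ref{suyama} that the paper performs: rows with $A^p_k=0$ are fixed, rows with pointer exactly $k$ swap between cases~(2) and~(3), and case-(3) rows with earlier pointer $q<k$ remain in case~(3) because the identity $A^p_r=A^q_r$ for $r>q$ propagates through the column operation. On this half your argument and the paper's coincide.

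For $\Phi_{k,\ell}^\pm$ you take a genuinely different route. The paper again checks Suyama's three cases directly for each row. You instead identify the operation with flipping the sign of the single root-edge $\{v_k,v_\ell\}$, verify via the path formula of Remark~\ref{rem:forest_matrix} that the modified column $\ell$ agrees entry-by-entry with the matrix of the altered forest, and then get membership in $\FB(d)$ for free from Remark~\ref{rem:tree_Bott}. This is a bit more conceptual and has the side benefit of establishing in advance part~(2) of Proposition~\ref{prop:FanoBottMatrix}, which the paper proves later; the paper's direct Suyama check is shorter but purely computational. Either approach is valid, and there is no circularity since both remarks you invoke precede the proposition.
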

\begin{proof}
Given $A \in \FB(d)$, we prove that $\Phi_k^\pm(A)$ and $\Phi_{k,\ell}^\pm(A)$ satisfy the conditions in Theorem~\ref{suyama}. 

For each $p \in [d]$, it follows from Theorem~\ref{suyama} that we have $A^p={\bf 0}$, or $A^p={\bf e}^q$ for some $q$ with $p < q \leq d$, 
or $A^p=-{\bf e}^q+A^q$ for some $q$ with $p < q \leq d$. 

Let $A^p={\bf 0}$. Then we easily see that $(\Phi_k^\pm(A))^p=(\Phi_{k,\ell}^\pm(A))^p={\bf 0}$. 

Let $A^p={\bf e}^q$ for some $q$ with $p < q \leq d$. 
\begin{itemize}
\item We see that $(\Phi_k^\pm(A))^p={\bf e}^q$ if $k \neq q$ and $(\Phi_k^\pm(A))^p=-{\bf e}^q+A^q$ if $k=q$. 
\item Note that $(\Phi_{k,\ell}^\pm(A))^\ell={\bf 0}$. We see that $(\Phi_{k,\ell}^\pm(A))^p={\bf e}^q$ if $q \neq \ell$. 
When $q=\ell$, if $p \neq k$ then $A^p_k = 0$. If $p=k$, then $(\Phi_{k,\ell}^\pm(A))^p=-\eb^q$, so the assertion holds. 
\end{itemize}

Let $A^p=-{\bf e}^q+A^q$ for some $q$ with $p< q \leq d$. Then $A^p_j=A^q_j$ holds for any $j \neq q$. 
\begin{itemize}
\item We see that $(\Phi_k^\pm(A))^p=-{\bf e}^q+(\Phi_k^\pm(A))^q$ if $k \neq q$ and $(\Phi_k^\pm(A))^p=\eb^q$ if $k=q$. 
\item If $q=\ell$, then $A^q=A^\ell={\bf 0}$. Thus, $(\Phi_{k,\ell}^\pm(A))^p=-\eb^q$ (resp. $=\eb^q$) when $p \neq k$ (resp. $p = k$). 
If $q \neq \ell$, then we see that $(\Phi_{k,\ell}^\pm(A))^p=-\eb^q+(\Phi_{k,\ell}^\pm(A))^q$. 
\end{itemize}
\end{proof}
Remark that (Op$1$) does not necessarily preserve $\FB(d)$, while this corresponds to an isomorphism of the associated signed rooted forests.

\begin{ex}
Let us consdier $A=\begin{pmatrix}
0 &0 &1 &0 &0 &0 \\
0 &0 &-1 &0 &0 &-1 \\
0 &0 &0 &0 &0 &-1 \\
0 &0 &0 &0 &-1 &1 \\
0 &0 &0 &0 &0 &1 \\
0 &0 &0 &0 &0 &0 
\end{pmatrix}$. 
Then it is straightforward to check that $A \in \calF\calB(6)$. Moreover, we see the following: 
\begin{align*}
&\Phi_3^\pm(A)=\begin{pmatrix}
0 &0 &-1 &0 &0 &-1 \\
0 &0 &1 &0 &0 &0 \\
0 &0 &0 &0 &0 &-1 \\
0 &0 &0 &0 &-1 &1 \\
0 &0 &0 &0 &0 &1 \\
0 &0 &0 &0 &0 &0 
\end{pmatrix}, \; 
\Phi_5^\pm(A)=\begin{pmatrix}
0 &0 &1 &0 &0 &0 \\
0 &0 &-1 &0 &0 &-1 \\
0 &0 &0 &0 &0 &-1 \\
0 &0 &0 &0 &1 &0 \\
0 &0 &0 &0 &0 &1 \\
0 &0 &0 &0 &0 &0 
\end{pmatrix}, \\
&\Phi^\pm_{3,6}(A)=\begin{pmatrix}
0 &0 &1 &0 &0 &0 \\
0 &0 &-1 &0 &0 &1 \\
0 &0 &0 &0 &0 &1 \\
0 &0 &0 &0 &-1 &1 \\
0 &0 &0 &0 &0 &1 \\
0 &0 &0 &0 &0 &0 
\end{pmatrix}, \;
\Phi^\pm_{5,6}(A)=\begin{pmatrix}
0 &0 &1 &0 &0 &0 \\
0 &0 &-1 &0 &0 &-1 \\
0 &0 &0 &0 &0 &-1 \\
0 &0 &0 &0 &-1 &-1 \\
0 &0 &0 &0 &0 &-1 \\
0 &0 &0 &0 &0 &0 
\end{pmatrix}. 
\end{align*}
\end{ex}

Let $X$ be $d$-stage Fano Bott manifolds. We discuss the relationship between $A(X)$ and $T_X$. 
As mentioned above, (Op$1$) on $A(X)$ corresponds to an isomorphism of $T_X$. 
More precisely, given a permutation matrix $P$ associated to a permutation $\pi$ on $[d]$, 
$\Phi_P$ corresponds to a bijection on $V(T_X)$ associated to $\pi$. 
For clarifying the relationship between (Op$2^\pm$) (and (Op$3^\pm$)) and the operations on $T_X$, 
we introduce a new notion and operation on the signed rooted forest $T_X$. 
\begin{defi}[Equivalent signs]\label{def:equiv_sign}
Let $T$ and $T'$ be signed rooted forests. 
We say that $T$ and $T'$ have \textit{equivalent signs} if $T$ and $T'$ are isomorphic as rooted forests 
and the signs of the edges of $T$ can be transformed, up to isomorphism as rooted forests, into the signs of the edges of $T'$ 
by some replacements of all the signs of the edges $\{v_i,v_j\}$ for all $j \in \varphi^{-1}(i)$ at the same time. 
When this is the case, we write $T \sim T'$. 
\end{defi}
It directly follows from the construction of the signed rooted forests from primitive relations that 
if $T \sim T'$ then the corresponding primitive relations are equivalent. 
\begin{ex}\label{ex:equiv_signs}
Let us consider the following three signed rooted forests $T_1,T_2,T_3$ (but actually, trees). 
\begin{table}[htb]
\begin{tabular}{ccc}
$T_1$ & $T_2$ & $T_3$ \\
\begin{minipage}{0.33\hsize}
\begin{center}
\begin{tikzpicture}[]
\node[level 1, label=above:{$v_5$}]{}
child {
	node[level 2, label=below:{$v_1$}] {}
	edge from parent node[left]{$+$}
}
child {
	node[level 2, label=below:{$v_2$}] {}
	edge from parent node[below right]{$-$}
}
child {
	node[level 2, label=right:{$v_4$}] {}
	child{
		node[level 2, label=below:{$v_3$}] {}
		edge from parent node[right]{$+$}
	}
	edge from parent node[right]{$-$}
};
\end{tikzpicture}
\end{center}
\end{minipage}
&
\begin{minipage}{0.33\hsize}
	\begin{center}
		\begin{tikzpicture}[]
		\node[level 1, label=above:{$v_5$}]{}
		child {
			node[level 2, label=below:{$v_1$}] {}
			edge from parent node[left]{$+$}
		}
		child {
			node[level 2, label=below:{$v_2$}] {}
			edge from parent node[below right]{$-$}
		}
		child {
			node[level 2, label=right:{$v_4$}] {}
			child{
				node[level 2, label=below:{$v_3$}] {}
				edge from parent node[right]{$+$}
			}
			edge from parent node[right]{$+$}
		};
		\end{tikzpicture}
	\end{center}
\end{minipage}
&
\begin{minipage}{0.33\hsize}
	\begin{center}
		\begin{tikzpicture}[]
		\node[level 1, label=above:{$v_5$}]{}
		child {
			node[level 2, label=below:{$v_1$}] {}
			edge from parent node[left]{$+$}
		}
		child {
			node[level 2, label=below:{$v_2$}] {}
			edge from parent node[below right]{$+$}
		}
		child {
			node[level 2, label=right:{$v_4$}] {}
			child{
				node[level 2, label=below:{$v_3$}] {}
				edge from parent node[right]{$+$}
			}
			edge from parent node[right]{$-$}
		};
		\end{tikzpicture}
	\end{center}
\end{minipage}
\end{tabular}
\end{table}
We see that $T_1 \sim T_2$. In fact, by exchanging the signs of the edges $\{v_5,v_1\},\{v_5,v_2\},\{v_5,v_4\}$ 
and exchanging the vertices $v_1$ and $v_2$ by a graph isomorphism, we see that $T_1$ can be transformed into $T_2$. 

On the other hand, $T_1$ and $T_3$ are not equivalent. 
In fact, the signs of the edges $\{v_5,v_1\}$ and $\{v_5,v_2\}$ in $T_1$ should be different, while those in $T_3$ should be the same. 
\end{ex}

The following proposition will be a crucial part of the proof of Theorem~\ref{thm:FanoBott}. 
\begin{prop}\label{prop:FanoBottMatrix}
Let $X$ be a $d$-stage Bott manifold. Let $A=(n_{ij}) \in \FB(d)$ be the upper triangular matrix associated to $X$. 
Let $T=T_X$ with $V(T)=\{v_1,\ldots,v_d\}$ and let $\varphi$ be the map with \eqref{condition}. 
\begin{itemize}
\item[(1)] For $k \in [d]$, the operation $\Phi_k^\pm$ of (Op$2^\pm$) corresponds to 
the simultaneous change of the signs of the edges $\{v_k,v_j\}$ of $T$ for all $j \in \varphi^{-1}(k)$. 
\item[(2)] For $k,\ell \in [d]$, if $A^\ell={\bf 0}$ and $A^k={\bf e}^\ell$, then $v_\ell$ is a root of $T$ and $v_k$ is its child. 
Moreover, the operation $\Phi_{k,\ell}^\pm$ of (Op$3^\pm$) corresponds to the change of the sign of the edge $\{v_\ell,v_k\}$ of $T$. 
\end{itemize}
\end{prop}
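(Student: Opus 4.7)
The plan is to verify each operation directly on the pair $(\varphi,\sigma)$ encoding $T_X$ and then read off the resulting matrix entries via the path formula of Remark~\ref{rem:forest_matrix}. The proof splits naturally into the two parts of the statement.

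For (1), the children of $v_k$ in $T_X$ are by construction exactly the $v_j$ with $j\in\varphi^{-1}(k)$, and simultaneously flipping all signs of the edges $\{v_k,v_j\}$ for $j\in\varphi^{-1}(k)$ is the same as replacing $\sigma(j)$ by $-\sigma(j)$ for each such $j$. This in turn is equivalent to swapping ${\bf v}_k^+\leftrightarrow {\bf v}_k^-$ globally: it does not affect the primitive relation for $k$ itself (which is symmetric in these two rays), and the relations for the children of $v_k$ change sign exactly as described. Using the standard identification ${\bf v}_i^+={\bf e}_i$ from Section~\ref{sec:inv}, the swap is realized by the unimodular change of coordinates ${\bf e}'_k=-{\bf e}_k+\sum_{j>k}n_{kj}{\bf e}_j={\bf v}_k^-$ and ${\bf e}'_j={\bf e}_j$ for $j\neq k$. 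Substituting ${\bf e}_k=-{\bf e}'_k+\sum_{j>k}n_{kj}{\bf e}'_j$ into each relation ${\bf v}_i^++{\bf v}_i^-=\sum_{j>i}n_{ij}{\bf e}_j$ and collecting terms yields new entries $n'_{ik}=-n_{ik}$ and $n'_{ij}=n_{ij}+n_{ik}n_{kj}$ for $j>k$, which is exactly $\Phi_k^\pm(A(X))$.

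For (2), the hypotheses $A^\ell={\bf 0}$ and $A^k={\bf e}^\ell$, combined with the Suyama normal form of Theorem~\ref{suyama}, force $\varphi(\ell)=d+1$ and $\varphi(k)=\ell$, $\sigma(k)=+$; hence $v_\ell$ is a root of $T_X$ and $v_k$ is its child along a $+$-edge. Flipping just this edge changes only the primitive relation for $k$, from ${\bf v}_k^++{\bf v}_k^-={\bf v}_\ell^+$ to ${\bf v}_k^++{\bf v}_k^-={\bf v}_\ell^-$. Because $v_\ell$ is a root, ${\bf v}_\ell^++{\bf v}_\ell^-={\bf 0}$, so ${\bf v}_\ell^-=-{\bf e}_\ell$; the new row $k$ is therefore $-{\bf e}^\ell$, matching the clause $(\Phi_{k,\ell}^\pm(A))^k_\ell=-A^k_\ell$. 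For rows $i\neq k$ the primitive relation itself is unchanged, and the sign flip only affects upward paths passing through $\{v_k,v_\ell\}$, so $n'_{ij}=n_{ij}$ whenever $j\neq\ell$. To obtain $n'_{i\ell}$, one case-splits on $n_{ik}\in\{0,+1,-1\}$ via Remark~\ref{rem:forest_matrix}: if $n_{ik}=\pm 1$ the path $v_i\to\cdots\to v_k$ has a prescribed sign pattern, and extending it through the flipped edge gives $n'_{i\ell}=n_{ik}\cdot n_{i\ell}$; if $n_{ik}=0$, any upward path from $v_i$ to $v_\ell$ either avoids $v_k$ or already fails the sign condition before it reaches $v_k$, so $n'_{i\ell}=n_{i\ell}$. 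These cases together reproduce the two non-trivial clauses of $\Phi_{k,\ell}^\pm$.

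The main subtlety is the propagation in part (2): although the operation is a single local sign flip in the tree, its effect on $A(X)$ is non-local, altering the $\ell$-th column for every descendant of $v_k$. The case analysis on $n_{ik}$ keyed to the path formula of Remark~\ref{rem:forest_matrix} organizes this propagation cleanly, and checking that the three sub-cases agree with the piecewise definition of $\Phi_{k,\ell}^\pm$ (and that no other columns are affected) is where the bulk of the verification lies.
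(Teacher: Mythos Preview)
Your proof is correct, but it takes a more computational route than the paper's. The paper exploits the observation (recorded in \eqref{eq:signed_edge} and Remark~\ref{rem:forest_matrix}) that a matrix in $\FB(d)$ is entirely determined by the left-most nonzero entry of each row; hence to identify the tree associated to $\Phi_k^\pm(A)$ or $\Phi_{k,\ell}^\pm(A)$ it suffices to track only how those leading entries move, which is immediate from the definitions. You instead verify \emph{all} entries of the resulting matrix: in (1) by realizing the sign flip as the ray swap $\mathbf{v}_k^+\leftrightarrow\mathbf{v}_k^-$ and carrying out the associated unimodular change of basis, and in (2) by invoking the full path formula of Remark~\ref{rem:forest_matrix} and splitting on $n_{ik}\in\{0,\pm 1\}$. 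Your approach costs more bookkeeping but buys a conceptual explanation of where (Op$2^\pm$) comes from (it is literally the relabeling $\mathbf{v}_k^\pm\mapsto\mathbf{v}_k^\mp$), and it makes the non-local effect of (Op$3^\pm$) on the $\ell$-th column completely explicit rather than implicit in Suyama's normal form.
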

\begin{proof}
(1) By \eqref{eq:signed_edge}, it suffices to show that for each $i=1,\ldots,d-1$, 
the left-most nonzero entry of $(\Phi_k^\pm(A))^i$ stays the same as that of $A^i$ when it is not in the $k$-th column, 
while the left-most nonzero entry of $(\Phi_k^\pm(A))^i$ changes from that of $A^i$ when it is in the $k$-th column. 
This directly follows from the definition of the operation $\Phi_k^\pm$. 

\noindent
(2) By our assumption, we know that the left-most nonzero entry of $A^k$ is in the $\ell$-th column. 
Similarly to (1), it suffices to show that for each $i$, the 
the left-most nonzero entry of $(\Phi_{k,\ell}^\pm(A))^i$ stays the same as that of $A^i$ when $i \neq k$, 
while the left-most nonzero entry of $(\Phi_{k,\ell}^\pm(A))^k$ changes from that of $A^k$. 
This also directly follows from the definition of the operation $\Phi_{k,\ell}^\pm$. 
\end{proof}
\begin{rem}
From Proposition~\ref{prop:FanoBottMatrix} (1) together with Proposition~\ref{prop:iso} and Remark~\ref{rem:tree_Bott}, 
we see that given two Fano Bott manifolds, the following three conditions are equivalent: 
\begin{itemize}
\item $X$ and $X'$ are isomorphic as varieties; 
\item $\PC(\Sigma_X)$ and $\PC(\Sigma_{X'})$ are equivalent; 
\item $A(X)$ can be transformed into $A(X')$ through a sequence of the two operations (Op$1$) and (Op$2^\pm$). 
\end{itemize}
\end{rem}

\bigskip

\section{Proof of Theorem~\ref{thm:FanoBott}}\label{sec:proof}
The goal of this section is to complete our proof of Theorem~\ref{thm:FanoBott}. 
Proposition~\ref{leafcut} is an essential part of the proof, 
which is the statement on the distinguishments of cohomology rings by the precise observations of the signed rooted forests associated to Fano Bott manifolds. 

Here, we introduce a special kind of rooted trees: 
\begin{defi}
Let 
        \begin{equation}\label{def:broom}
	\begin{split}
	& V = \{ v_0, v_1, \ldots, v_p, w_1, w_2, \ldots, w_q \} ~~(p \geq 1, \; q \geq 2), \\
	& E = \{\{v_{i-1}, v_i \} : i=1,\ldots,p\} \cup \{\{v_p,w_j\} : j=1,\ldots,q \}.
	\end{split}
	\end{equation}
We call the rooted tree $(V,E,v_0)$ a \textit{broom with $q$ leaves}. 
\end{defi}

\begin{lem}\label{broom}
	Let $B$ and $B'$ be brooms and assign the certain signs of the edges of $B$ and $B'$. 
        Let $X$ and $X'$ be the Fano Bott manifolds corresponding to $B$ and $B'$, respectively. 
        Assume that $H^*(X) \cong H^*(X')$. Then $B \sim B'$. 
\end{lem}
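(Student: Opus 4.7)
The plan is to proceed by induction on $q$, using Propositions~\ref{prop:tree_cohomology} and~\ref{prop:FanoBottMatrix}. The setup step is to invoke Proposition~\ref{prop:tree_cohomology}~(1) with the given ring isomorphism $F: H^*(X) \to H^*(X')$, which produces a rooted tree isomorphism $f: B \to B'$. Since both trees are brooms, $f$ preserves the spine and maps leaves to leaves, so $B$ and $B'$ share the same $p$ and $q$. After a rooted-tree automorphism of $B'$ (which is an allowed $\sim$-move), I may assume $f(w_j) = w_j'$ for every leaf index $j$.

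Next I normalize using Proposition~\ref{prop:FanoBottMatrix}~(1): each internal spine vertex $v_i$ (for $0 \leq i \leq p-1$) has a unique child, so the flip at $v_i$ alters exactly one spine edge; iterating, I may assume all spine edges of both $B$ and $B'$ carry the sign $+$. In this normal form the remaining data are the leaf sign sequences $(\delta_j)_{j=1}^q$ of $B$ and $(\delta_j')_{j=1}^q$ of $B'$, and the only $\sim$-move still available is the simultaneous flip at the common parent $v_p$ of all leaves, combined with permutations of the leaf labels. Introducing the invariant $\kappa(B) := \min(|\{j : \delta_j = +\}|, |\{j : \delta_j = -\}|)$, the conclusion $B \sim B'$ is equivalent to $\kappa(B) = \kappa(B')$.

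For the inductive step suppose $q \geq 3$. For each leaf $w_j$, applying Proposition~\ref{prop:tree_cohomology}~(2) to the rooted subtree $B \setminus \{w_j\}$ (which still contains the root of $B$) yields $H^*(X_{B \setminus w_j}) \cong H^*(X_{B' \setminus w_j'})$. These are brooms with $q - 1 \geq 2$ leaves, so by the inductive hypothesis $B \setminus w_j \sim B' \setminus w_j'$, and therefore $\kappa(B \setminus w_j) = \kappa(B' \setminus w_j')$. A short combinatorial check shows that the multiset $\{\kappa(B \setminus w_j)\}_{j=1}^q$ determines $\kappa(B)$ uniquely whenever $q \geq 3$: writing $m = \kappa(B)$, this multiset equals $\{0\}^q$ if $m = 0$, equals $\{m-1\}^m \cup \{m\}^{q-m}$ if $m \geq 1$ and $q > 2m$, and equals $\{m-1\}^q$ if $m \geq 1$ and $q = 2m$; these three families are pairwise distinct for $q \geq 3$. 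Hence $\kappa(B) = \kappa(B')$, and $B \sim B'$ follows.

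The principal obstacle is the base case $q = 2$. Here $\kappa \in \{0,1\}$, and the two classes must be separated directly at the level of the cohomology ring. Writing $s_i$ for the generator at the spine vertex $v_i$ and $l_j$ for the generator at the leaf $w_j$, the two $\sim$-classes in normal form are distinguished by the relations $s_p^2 = (l_1 + l_2) s_p$ and $s_p^2 = (l_1 - l_2) s_p$, together with a corresponding modification of the $v_{p-1}$-relation. By Lemma~\ref{sve}, any ring isomorphism $F$ restricts to a signed permutation of the s.v.e.\ set $\{\pm l_1, \pm l_2\}$; reducing via automorphisms of $B'$ to the cases $F(l_1) = l_1$ and $F(l_2) = \pm l_2$, I analyze the compatibility constraints on $F(s_p), F(s_{p-1}), \ldots, F(s_0)$ imposed by the defining relations, tracked monomial-by-monomial in $H^4$. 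In each subcase a careful bookkeeping of the basis coefficients produces an integer-coefficient linear system whose solvability fails, essentially through a $\frac{1}{2} \in \ZZ$ obstruction, whenever $\kappa(B) \neq \kappa(B')$. This case cannot be reduced further by Proposition~\ref{prop:tree_cohomology}~(2), because cutting either leaf from a $q = 2$ broom produces a path, whose cohomology ring is insensitive to the original leaf signs; the $\kappa = 0$ versus $\kappa = 1$ distinction must therefore be detected directly in the ambient broom ring.
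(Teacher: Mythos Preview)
Your inductive reduction from $q$ to $q-1$ is correct and gives a valid alternative to the paper's route, which instead selects two suitable leaves and passes in one step to a sub-broom with $q=2$ via Proposition~\ref{prop:tree_cohomology}~(2). Both approaches bottom out at the same $q=2$ base case, and you correctly identify that as the crux.

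The genuine gap is that you do not actually prove the $q=2$ case. You assert that tracking $F(s_p), F(s_{p-1}), \ldots$ through the relations in $H^4$ ``produces an integer-coefficient linear system whose solvability fails, essentially through a $\tfrac12 \in \ZZ$ obstruction'', but nothing is carried out, and the claim is not routine: the handle has arbitrary length $p$, so the number of generators and relations is unbounded, and any monomial-by-monomial argument would have to be uniform in $p$. Moreover, your preliminary reduction ``via automorphisms of $B'$'' to $F(l_1)=l_1$ is not justified either, since $l_1' \mapsto -l_1'$ alone does not extend to a ring automorphism of $H^*(X')$. The paper bypasses all of this by passing to $\ZZ/2$ coefficients and invoking the cut-rank invariant of \cite{CMO17}: after renaming the vertices so that the leaves are $v_1,v_2$ and the spine is $v_3,\ldots,v_{p+3}$, the two $\sim$-classes are distinguished by $\rho_D(L_0(D))=1$ versus $\rho_{D'}(L_0(D'))=2$, which by \cite[Theorem~1.1 and Proposition~8.6]{CMO17} forces $H^*(X)\otimes\ZZ/2 \not\cong H^*(X')\otimes\ZZ/2$ and hence $H^*(X)\not\cong H^*(X')$. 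This is a two-line computation once that machinery is in hand, and it handles all $p$ at once; your proposal lacks any comparable mechanism.
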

\begin{proof}
Let $B$ and $B'$ be the same broom with $q$ leaves $(V,E,v_0)$ as in \eqref{def:broom}.

First, we prove the assertion in the case $q=2$. 
Then there are only two possibilities $B$ and $B'$ of assignments of signs up to equivalence as follows: 
\begin{align*}
 B&: \; \sign(\{v_{i-1},v_i\})=+ \text{ for }i=1,\ldots,p, \;\; \sign(\{v_p,w_1\})=\sign(\{v_p,w_2\})=+, \\
B'&: \; \sign(\{v_{i-1},v_i\})=+ \text{ for }i=1,\ldots,p, \;\; \sign(\{v_p,w_1\})=+, \; \sign(\{v_p,w_2\})=-. 
\end{align*}
Namely, $B \not\sim B'$. Let $R=H^*(X) \otimes_\ZZ (\ZZ/2)$ and $R'=H^*(X') \otimes_\ZZ (\ZZ/2)$. 
For the convenience to analyze $R$ and $R'$, we rename the vertices as follows: 
$$w_1,w_2,v_p,v_{p-1},\ldots,v_0 \longrightarrow v_1,v_2,v_3,\ldots,v_{p+3}.$$
Once we can prove that $R \not\cong R'$, we conclude $H^*(X) \not\cong H^*(X')$ as required. 

Let $(n_{ij})$ (resp. $(n_{ij}')$) be the upper triangular matrix corresponding to $B$ (resp. $B'$). 
By Remark~\ref{rem:forest_matrix}, we can compute them as follows: 
\begin{align*}
&n_{13}=1, \; n_{i,i+1}=1 \text{ for }i=2,\ldots,p+2, \;\; n_{ij}=0 \text{ otherwise}, \\ 
&n_{23}'=-1, \; n_{13}'=n_{24}'=1, n_{i,i+1}'=1 \text{ for }i=3,\ldots,p+2 \;\; n_{ij}'=0 \text{ otherwise}. 
\end{align*}
Here, we consider the entries of $(n_{ij})$ and $(n_{ij}')$ as the elements of $\ZZ/2$. More concretely, we identify $-1$ with $1$. 
Consider the cut-rank discussed in \cite[Section 8.5]{CMO17}. 
Let $D$ (resp. $D'$) denote the acyclic digraph corresponding to $(n_{ij}) \in (\ZZ/2)^{(p+3) \times (p+3)}$ (resp. $(n_{ij}') \in (\ZZ/2)^{(p+3) \times (p+3)}$). 
In this case, we see that both $L_0(D)$ and $L_0(D')$ correspond to $\{1,2\}$ and 
\begin{align*}
\rho_D(L_0(D))&=\rank\left(\begin{pmatrix}1 &0 &\cdots &0 \\ 1 &0 &\cdots &0 \end{pmatrix}\right)=1, \;\;\text{ and }\\
\rho_{D'}(L_0(D'))&=\rank\left(\begin{pmatrix}1 &0 &\cdots &0 \\ 1 &1 &\cdots &0 \end{pmatrix}\right)=2. 
\end{align*}
Therefore, we obtain that $R \not\cong R'$ by \cite[Theorem 1.1 and Proposition 8.6 (i)]{CMO17}. 

Now, let us consider the general case. We assign the signs of the edges $\{v_p,w_j\}$ for $j=1,\ldots,q$ as follows: 
\begin{align*}
B: \;\; &|\{j \in [q] : \sign(\{v_p,w_j\})=+\}|=a, \;\; |\{j \in [q] : \sign(\{v_p,w_j\})=-\}|=b, \\
B': \;\; &|\{j \in [q] : \sign(\{v_p,w_j\})=+\}|=a', \;\; |\{j \in [q] : \sign(\{v_p,w_j\})=-\}|=b'. 
\end{align*}
Here, we have $a+b=a'+b'=q$. When $B \not\sim B'$, we have $\{a,b\} \neq \{a',b'\}$. 
Then, for any isomorphism $f:V \rightarrow V$ between signed rooted trees $B$ and $B'$, we can find a pair of leaves $w_i,w_j$ such that 
$$(\sign(\{v_p,w_i\}),\sign(\{v_p,w_j\})) \neq (\sign(\{v_p,f(w_i)\}),\sign(\{v_p,f(w_j)\})).$$ 
Let us fix such an isomorphism $f$ and such leaves $w_i,w_j$. 
Let $B_0$ be the subtree of $B$ induced by $v_0,v_1,\ldots,v_p$ and $w_i,w_j$. In particular, $B_0$ is a broom with $2$ leaves. 
Then it follows from the above discussion that $H^*(X_{B_0}) \not\cong H^*(X_{f(B_0)})$. 
Therefore, Proposition~\ref{prop:tree_cohomology} (2) implies that $H^*(X) \not\cong H^*(X')$, as desired. 
\end{proof}

The following proposition plays the crucial role for the proof of Theorem~\ref{thm:FanoBott}. 

\begin{prop}\label{leafcut}
Let $X$ and $X'$ be Fano Bott manifolds. Assume that $H^*(X) \cong H^*(X')$. 
Then $T_X$ and $T_{X'}$ have equivalent signs by exchanging the signs assigned to the edges adjacent to the roots if necessary. 
\end{prop}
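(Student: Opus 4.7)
The plan is to combine Proposition~\ref{prop:tree_cohomology} with the broom lemma (Lemma~\ref{broom}) in an induction on $|V(T_X)|$. By Proposition~\ref{prop:tree_cohomology}(1), the assumed ring isomorphism $F$ induces a rooted forest isomorphism $f:T_X \to T_{X'}$, which I fix throughout. Since a simultaneous sign flip at a vertex $v$ only changes the signs of the edges from $v$ to its children, the desired conclusion $T_X \sim T_{X'}$ reduces to a local check at each non-leaf vertex $v$ of $T_X$: the multiset of signs on edges from $v$ to its children must agree with the corresponding multiset at $f(v)$, up to simultaneous flip.

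To verify this local condition at $v$ when $v$ has at least two leaf children, I would take the rooted subtree $T_0 \subset T_X$ given by the path from the root of $v$'s component down to $v$, together with all leaf children of $v$. Then $T_0$ is a broom, and so is $f(T_0)$; Proposition~\ref{prop:tree_cohomology}(2) gives $H^*(X_{T_0}) \cong H^*(X_{f(T_0)})$, and Lemma~\ref{broom} then forces $T_0 \sim f(T_0)$. This encodes exactly the matching of sign multisets on the edges from $v$ to its leaf children relative to $f(v)$. When $v$ is itself a root of its component, the ``exchanging of signs on edges adjacent to roots'' allowed by the statement supplies precisely the flexibility needed to absorb the discrepancy that would otherwise obstruct turning this partial configuration into a genuine broom.

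For pairs of children $c, c'$ of $v$ where at least one is not a leaf, I would proceed by strong induction on $|V(T_X)|$. Choose descendant leaves $\ell_c$ and $\ell_{c'}$ in the subtrees rooted at $c$ and $c'$ respectively, and let $T_0 \subset T_X$ be the rooted subtree consisting of the path from the root down to $v$ together with the paths from $v$ to $\ell_c$ and from $v$ to $\ell_{c'}$. Then $T_0$ has strictly fewer vertices than $T_X$, while $H^*(X_{T_0}) \cong H^*(X_{f(T_0)})$ by Proposition~\ref{prop:tree_cohomology}(2); the induction hypothesis yields $T_0 \sim f(T_0)$, from which one reads off the sign relationship between $\{v, c\}$ and $\{v, c'\}$ versus $\{f(v), f(c)\}$ and $\{f(v), f(c')\}$. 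Varying the choice of descendant leaves covers every pair of children of $v$ and hence exhausts the local check.

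The main obstacle I anticipate is assembling these pairwise sign invariants, obtained from many different subtree reductions, into a single globally compatible flip pattern at each vertex. In particular, when $v$ has children of several distinct subtree types, one has to verify that the flip decisions forced within each subtree-type class (by the broom lemma in the leaf case and the induction hypothesis in the non-leaf case) align into a single flip choice at $v$; the rooted-forest isomorphism $f$ supplied by Proposition~\ref{prop:tree_cohomology}(1) already matches children by subtree type, which should let the bookkeeping go through, but the compatibility of these local choices is the technical heart of the argument.
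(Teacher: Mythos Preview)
Your overall strategy---use Proposition~\ref{prop:tree_cohomology} to reduce to a local sign check at each vertex and then invoke Lemma~\ref{broom} on suitable subtrees---matches the paper's. However, you introduce an unnecessary complication by distinguishing leaf children of $v$ from non-leaf children and handling the latter via an induction on $|V(T_X)|$ using paths down to descendant leaves.

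The observation you are missing is that \emph{any} collection of children of $v$ (leaf or not in $T_X$) becomes the leaf set of a broom once you pass to the appropriate rooted subtree. Concretely, if $v$ is a non-root vertex and $w_1,\dots,w_q$ are some of its children, let $B$ be the subtree of $T_X$ consisting of the path from the root down to $v$ together with $w_1,\dots,w_q$ (and nothing below them). Then $B$ is a rooted subtree sharing the root of $T_X$, so Proposition~\ref{prop:tree_cohomology}(2) gives $H^*(X_B)\cong H^*(X_{f(B)})$; and $B$ is a broom with $q$ leaves, so Lemma~\ref{broom} applies directly. This is exactly what the paper does: assuming $T_X\not\sim T_{X'}$ (even after root-edge flips), it locates a bad vertex $v$---necessarily at distance $\geq 1$ from the root, by the root-edge flexibility you also noted---selects a subset of children of $v$ whose sign multiset fails to match under $f$, forms the broom $B$ on those children, and obtains $B\not\sim f(B)$ and hence a contradiction from Lemma~\ref{broom}.

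Taking children of $v$ directly as broom leaves completely sidesteps the pairwise-to-global assembly problem you flag as your main obstacle: one well-chosen broom detects the failure at $v$ in a single stroke, and no induction is needed. Your inductive detour through descendant leaves is workable in principle (the pairwise constraints you extract are in fact mutually compatible, precisely because they all come from the one $f$ induced by $F$ via Proposition~\ref{prop:tree_cohomology}), but verifying that compatibility across subtree types is extra work that the direct broom argument renders unnecessary.
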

\begin{proof}
By Proposition~\ref{prop:tree_cohomology} (1), there is an isomorphism $f:T_X \rightarrow T_{X'}$ as rooted forests. 

Suppose, on the contrary, that $T_X \not\sim T_{X'}$. Then Definition~\ref{def:equiv_sign} says that 
there is a vertex $v$ of $T_X$ such that the signs of the edges adjacent to $v$ are not preserved by $f$ even if we exchange all those signs at the same time. 
Since the equivalence of signs is considered up to isomorphism as rooted forests, 
we have to treat a certain part of children of $v$ which can be transferred by an isomorphism as rooted forests. 
Let $w_j^{(i)}$ ($i=1,\ldots,p$, $j=1,\ldots,q_i$) be all the children of $v$ such that 
$w_j^{(i)}$ can be transferred into $w_{j'}^{(i')}$ by $f$ if and only if $i=i'$. 
Let $v'=f(v)$, let ${w'}_i^{(j)}=f(w_j^{(i)})$ for each $i$ and $j$, 
let $a_i=|\{ j \in [q_i] : \sign(\{v,w_j^{(i)}\})=+\}|$ (resp. $a_i'=|\{ j \in [q_i] : \sign(\{v',{w'}_j^{(i)}\})=+\}|$) and let $b_i=q_i-a_i$ (resp. $b_i'=q_i'-a_i'$) for each $i$. 
The assumption $T \not\sim T'$ implies that there is $I \subset [p]$ with 
$$\left\{\sum_{i \in I}a_i,\sum_{i \in I}b_i\right\} \neq \left\{\sum_{i \in I}a_i',\sum_{i \in I}b_i'\right\},$$
otherwise it is a contradiction to the choice of $v$. Let us fix such $I$. 

Note that the length from the root $v_0$ of $T_X$ to $v$ is at least $1$. 
Let $v_0,v_1,\ldots,v_k=v$ be the vertices between $v_0$ and $v$. 
Consider the broom $B$ defined by $v_0,v_1,\ldots,v_k$ and $w_j^{(i)}$ for $i \in I$ and $j \in [q_i]$. Let $B'=f(B)$. 
Note that $B \not\sim B'$. By Lemma~\ref{broom}, we see that $H^*(X_B) \not\cong H^*(X_{B'})$. 

On the other hand, since $B$ and $B'$ are the rooted subtrees of $T_X$ and $T_{X'}$ whose roots are the same as that of $T_X$ and $T_{X'}$, respectively, 
we see that $H^*(X_B) \cong H^*(X_{B'})$ by Proposition~\ref{prop:tree_cohomology} (2), a contradiction. 

Therefore, we conclude that $T_X \sim T_{X'}$, as required. 
\end{proof}

Now, we are ready to give a proof of Theorem~\ref{thm:FanoBott}. 
\begin{proof}[Proof of Theorem~\ref{thm:FanoBott}]
Note that (2) $\Rightarrow$ (3) is trivial and (3) $\Rightarrow$ (1) directly follows from Propositions~\ref{prop:FanoBottMatrix} and \ref{leafcut}. 

In what follows, we prove (1) $\Rightarrow$ (2). 
Let $X$ and $X'$ be $d$-stage Fano Bott manifolds and let $T=T_X$ and $T'=T_{X'}$ be the associated signed rooted forests, respectively. 
By Proposition~\ref{prop:FanoBottMatrix}, the assumption of (1) says that 
$T \cong T'$ as (non-signed) rooted forests and $T$ and $T'$ have equivalent signs by exchanging the signs assigned to the edges adjacent to the roots. 
Namely, there is an isomorphism $f:T \rightarrow T'$ as rooted forests which preserves the signs of all edges of $T$ and $T'$ except for the edges adjacent to roots. 
We assume that $f$ is an ``identity'', more precisely, $f(v_i)=v_i'$, where $V(T)=\{v_1,\ldots,v_d\}$ and $V(T')=\{v_1',\ldots,v_d'\}$. 
        
Let $\Sigma_1=(\Sigma_X)_1=\{{\bf v}_i^\pm : i=1,\ldots,d\}$ and let $\Sigma_1'=(\Sigma_{X'})_1=\{{\bf w}_i^\pm : i=1,\ldots,d\}$. 
Let $A:=A(X)=(n_{ij})_{1 \leq i,j \leq d}$ (resp. $A':=A(X')=(n_{ij}')_{1 \leq i,j \leq d}$). 
As mentioned in Remark~\ref{rem:forest_matrix}, we can read off $(n_{ij})$ (resp. $(n_{ij}')$) from $T$ (resp. $T'$). 
Since ${\bf v}_1^+,\ldots,{\bf v}_d^+$ (resp. ${\bf w}_1^+,\ldots,{\bf w}_d^+$) are linearly independent, we can take ${\bf v}_i^+={\bf w}_i^+={\bf e}^i$ for each $i$. 
Thus, we see that the matrix $M$ (resp. $M'$) whose row vectors consist of ${\bf v}_1^+,\ldots,{\bf v}_d^+,{\bf v}_1^-,\ldots,{\bf v}_d^-$ 
(resp. ${\bf w}_1^+,\ldots,{\bf w}_d^+,{\bf w}_1^-,\ldots,{\bf w}_d^-$) looks as follows: 
\begin{align*}
        M=\begin{pmatrix}
         & & & & \\
	 & &E& & \\
	 & & & & \\
	-1 & n_{12} & n_{13}  & \cdots & n_{1d} \\
	   & -1        & n_{23} & \cdots & n_{2d} \\
	   &            & -1        & \cdots & n_{3d} \\
	   &            &            & \ddots & \vdots \\
           &            &            &            & -1        
        \end{pmatrix}\text{ and }
        M'=\begin{pmatrix}
         & & & & \\
	 & &E& & \\
	 & & & & \\
	-1 & n_{12}' & n_{13}'  & \cdots & n_{1d}' \\
	   & -1        & n_{23}' & \cdots & n_{2d}' \\
	   &            & -1        & \cdots & n_{3d}' \\
	   &            &            & \ddots & \vdots \\
           &            &            &            & -1        
        \end{pmatrix}, 
\end{align*}
where $E \in \{0,1\}^{d \times d}$ denotes the identity matrix. 
Namely, the top halves of $M$ and $M'$ are $E$ and the bottom half of $M$ (resp. $M'$) is $-E+A$ (resp. $-E+A'$).

Here, we see that the operation (Op$1$) is a unimodular transformation on $M$. In fact, (Op$1$) permutes the corresponding columns and the corresponding rows of the top half and the bottom half. 
Moreover, since (Op$2^\pm$) is a unimodular transformation expressed by $T_k:=\begin{pmatrix} \eb^1 \\ \vdots \\ -\eb^k+A^k \\ \vdots \\ \eb^d\end{pmatrix} \in \ZZ^{d \times d}$, 
we see that $E \cdot T_k = \begin{pmatrix} \eb^1 \\ \vdots \\ -\eb^k+A^k \\ \vdots \\ \eb^d\end{pmatrix}$ and \begin{align*}
(-E+A) \cdot T_k = -T_k + \Phi_k^\pm(A) = \begin{pmatrix} -\eb^1+\Phi_k^\pm(A)^1 \\ \vdots \\ \eb^k \\ \vdots \\ -\eb^d+\Phi_k^\pm(A)^d\end{pmatrix}. 
\end{align*}
Let $M''$ be the resulting matrix after applying a certain sequence of unimodular transformations corresponding to (Op$1$) and (Op$2^\pm$) to $M$ and let $T''$ be the signed rooted forest associated to (the bottom half of) $M''$. 
Since $A$ and $A'$ are Fano Bott equivalent, we may assume that $M''$ can be transform into $M'$ by a certain sequence of transformations corresponding to (Op$3^\pm$).

First, we assume that $T$ and $T'$ are trees, i.e., connected. Let $v_d$ be the unique root of $T$. 
By our assumption, we see that $M''$ and $M'$ agree except for the $d$-th columns. (See Proposition~\ref{prop:FanoBottMatrix}.) 
Let $v_{i_1},\ldots,v_{i_k}$ be the children of the root $v_d$ such that 
the signs of $\{v_d,v_{i_j}\}$ and $\{v_d',v_{i_j}'\}$ are different. 
Then, let us consider the subtrees $T_{i_j}$ of $T$ induced by all descendants of $v_{i_j}$ with its root $v_{i_j}$. 
By the construction of $(n_{ij})$ from $T$, we see that $n_{pq} \neq 0$ only if both $v_p$ and $v_q$ are contained in the same $T_{i_j}$ for some $j$. 
Now, apply the unimodular transformations $I_j=(e_{pq}^{(j)})_{1 \leq p,q \leq d} \in \ZZ^{d \times d}$ for $j=1,\ldots,k$ to $M''$, where 
$$e_{pp}^{(j)}=\begin{cases}
-1 &\text{ if } v_p \in V(T_{i_j}), \\
1 &\text{ otherwise}, 
\end{cases}$$
and $e_{pq}^{(j)}=0$ if $p \neq q$. Then we see that the row vectors of $M'' \cdot (I_1 \cdots I_k)$ and $M'$ coincide up to sign. 
Hence, those satisfy the condition in Lemma~\ref{key}, so we conclude that $X$ and $X'$ are diffeomorphic. 

Even if $T$ and $T'$ have at least two connected components, we may apply the above procedure to each connected component. 
\end{proof}
\begin{rem}\label{rem:(4)}
By the above proof, we see that the three conditions of Theorem~\ref{thm:FanoBott} is equivalent to the fourth condition: 
\begin{itemize}
\item[(4)] the signed rooted forests $T_X$ and $T_{X'}$ have equivalent signs by exchanging the signs assigned to the edges adjacent to the roots if necessary.
\end{itemize}
\end{rem}

\begin{ex}
Let us consider the following signed rooted forests $T$ and $T'$. 
\begin{table}[!h]
\begin{tabular}{ccc}
$T$ & $T'$ & $T''$ \\
\begin{minipage}{0.33\hsize}
\begin{center}
\begin{tikzpicture}[]
\node[level 1, label=above:{$v_7$}]{}
child {
	node[level 2, label=left:{$v_3$}] {}
	child{
		node[level 3, label=below:{$v_1$}] {}
		edge from parent node[left]{$+$}
	}
	child{
		node[level 3, label=below:{$v_2$}] {}
		edge from parent node[right]{$-$}
	}
	edge from parent node[left]{$+$}
}
child {
	node[level 2, label=right:{$v_6$}] {}
	child{
		node[level 3, label=below:{$v_4$}] {}
		edge from parent node[left]{$+$}
	}
	child{
		node[level 3, label=below:{$v_5$}] {}
		edge from parent node[right]{$+$}
	}
	edge from parent node[right]{$+$}
};
\end{tikzpicture}
\end{center}
\end{minipage}
&
\begin{minipage}{0.33\hsize}
\begin{center}
\begin{tikzpicture}[]
\node[level 1, label=above:{$v_7$}]{}
child {
	node[level 2, label=left:{$v_3$}] {}
	child{
		node[level 3, label=below:{$v_1$}] {}
		edge from parent node[left]{$-$}
	}
	child{
		node[level 3, label=below:{$v_2$}] {}
		edge from parent node[right]{$+$}
	}
	edge from parent node[left]{$+$}
}
child {
	node[level 2, label=right:{$v_6$}] {}
	child{
		node[level 3, label=below:{$v_4$}] {}
		edge from parent node[left]{$-$}
	}
	child{
		node[level 3, label=below:{$v_5$}] {}
		edge from parent node[right]{$-$}
	}
	edge from parent node[right]{$-$}
};
\end{tikzpicture}
\end{center}
\end{minipage}
&
\begin{minipage}{0.33\hsize}
\begin{center}
\begin{tikzpicture}[]
\node[level 1, label=above:{$v_7$}]{}
child {
	node[level 2, label=left:{$v_3$}] {}
	child{
		node[level 3, label=below:{$v_1$}] {}
		edge from parent node[left]{$-$}
	}
	child{
		node[level 3, label=below:{$v_2$}] {}
		edge from parent node[right]{$+$}
	}
	edge from parent node[left]{$+$}
}
child {
	node[level 2, label=right:{$v_6$}] {}
	child{
		node[level 3, label=below:{$v_4$}] {}
		edge from parent node[left]{$-$}
	}
	child{
		node[level 3, label=below:{$v_5$}] {}
		edge from parent node[right]{$-$}
	}
	edge from parent node[right]{$+$}
};
\end{tikzpicture}
\end{center}
\end{minipage}
\end{tabular}
\end{table}
${}$\\
We see that $T \not\sim T'$ by the difference of signs of edges $\{v_7,v_3\}$ and $\{v_7,v_6\}$, 
but the corresponding Fano Bott manifolds are diffeomorphic since the remaining signs can be transformed. 
The following matrices are $M,M''$ and $M'$ appearing in the above proof and 
the unimodular transformations $I_1$ which corresponds to the subtree induced by the descendants of $v_6$ used in the proof: 
\begin{align*}
&M=\begin{pmatrix}
  & & &E& & & \\
  & & & & & & \\
-1&  &1 & & & & \\
  &-1&-1& & & &1 \\
  &  &-1& & & &1\\
  &  & &-1& &1& \\
  &  & & &-1&1& \\
  &  & & & &-1&1\\
  &  & & & & &-1\\
\end{pmatrix}, \;\;
M''=\begin{pmatrix}
  & & &E& & & \\
  & & & & & & \\
-1& &-1& & & &1\\
  &-1&1& & & &\\
  & &-1& & & &1\\
  & & &-1& &-1&1 \\
  & & & &-1&-1&1 \\
  & & & & &-1&1\\
  & & & & & &-1\\
\end{pmatrix}, \\
&I_1=
\begin{pmatrix}
1 &  &  & & & & \\
   &1&  & & & & \\
   &  &1 & & & & \\
   &  &  &-1& & & \\
   &  &  & &-1& & \\
   &  &  & & &-1& \\
   &  &  & & & &1 
\end{pmatrix}, \;\;
M'=\begin{pmatrix}
  & & &E& & & \\
  & & & & & & \\
-1& &-1& & & &1\\
  &-1&1& & & & \\
  & &-1& & & &1\\
  & & &-1& &-1&-1\\
  & & & &-1&-1&-1\\
  & & & & &-1&-1\\
  & & & & & &-1\\
\end{pmatrix}. 
\end{align*}
We can transform $M$ into $M''$ by applying the permutation matrix corresponding to the transposition $(1,2)$ and $T_6$ appearing in the above proof. 
Note that $T''$ above is the signed rooted tree corresponding to $M''$. 

We can see that $M''I_1$ coincides with $M'$ up to signs of rows. 
In fact, the first, second, third and seventh rows of the top and bottom halves of $M''I_1$ are exactly equal to those of $M'$, 
while the fourth, fifth and sixth rows are equal up to signs. 
\end{ex}

\bigskip

\end{document}